 \theoremstyle{definition}
 \newtheorem*{defn*}{\protect\definitionname}
\theoremstyle{plain}
\newtheorem{thm}{\protect\theoremname}
  \theoremstyle{plain}
  \newtheorem{conjecture}[thm]{\protect\conjecturename}
  \theoremstyle{plain}
  \newtheorem{prop}[thm]{\protect\propositionname}
  \theoremstyle{plain}
  \newtheorem{lem}[thm]{\protect\lemmaname}
  \providecommand{\conjecturename}{Conjecture}
  \providecommand{\definitionname}{Definition}
  \providecommand{\lemmaname}{Lemma}
  \providecommand{\propositionname}{Proposition}
\providecommand{\theoremname}{Theorem}
\begin{document}

\calclayout
\title{Delay-Induced Switched States in a slow-fast system}


\author{Stefan Ruschel, Serhiy Yanchuk}


\maketitle
\begin{abstract}
We consider the two-component delay system $\varepsilon x^{\prime}(t)=-x(t)-y(t)+f(x(t-1)),$
$y^{\prime}(t)=\eta x(t)$ with small parameters $\varepsilon,\eta$,
and positive feedback function $f$. Previously, such systems have
been reported to model switching in optoelectronic experiments, where
each switching induces another one after approximately one delay time,
related to one round trip of the signal. In this paper, we study these
delay-induced switched states. We provide conditions for their existence
and show how the formal limits $\varepsilon\to0$ and/or $\eta\to0$
facilitate our understanding of this phenomenon. 
\end{abstract}

\section{Introduction}
Many nonlinear dynamical systems exhibit oscillations on different
time-scales characterized by interchanged phases of slow and fast
motion. Their analysis relies on the study of reduced systems, the
properties of which are then recombined using geometric singular perturbation
theory \cite{Kuehn2015}. Analogous tools are increasingly available
for functional differential equations \cite{Diekmann1995,Bates1998},
yet the effects of time-delay on multiple time-scale systems are largely
uncharted territory. It is very natural to observe relaxation oscillations
\cite{Fowler2002}, canard trajectories \cite{Campbell2009,Krupa2014,Souza2017}
and mixed mode oscillations \cite{Kouomou2005,TallaMbe2015}; phenomena
which are not delay-specific and extensively studied in the non-delayed
case \cite{Krupa2001,Desroches2012a}. Here, we study delay-induced
switched states as one further type of solution that naturally arises
in this set-up, yet cannot be found in the non-delayed case \cite{Weicker2012,Weicker2013,Larger2013,Larger2015,Erneux2016}. 

Let us consider the system 
\begin{align}
\varepsilon\dot{x}(t) & =-x(t)-y(t)+f(x(t-1)),\label{eq:def-x}\\
\dot{y}(t) & =\eta x(t),\label{eq:def-y}
\end{align}
where the dot denotes the derivative with respect to time. The parameters
$\varepsilon,\eta$ are assumed positive and sufficiently small so
that (\ref{eq:def-x})\textendash (\ref{eq:def-y}) can be regarded
as a singular perturbation problem acting on at least three different
timescales $\varepsilon,$ $1,$ and $1/\eta$. Figure~\ref{fig:A-solution}(a)
shows an example solution where the $x$-component switches back and forthbetween negative and positive values after approximately one delay time. The switches are sharp, on short intervals of order $\varepsilon$.
Between the switches, the solution changes slowly on time scale $1/\eta$.
Note that (\ref{eq:def-x})\textendash (\ref{eq:def-y}) can be transformed
to an equivalent system with short time delay $\eta,$ or long time
delay $1/\varepsilon$. 

This work is motivated by recent optoelectronic experiments \cite{Weicker2012,Weicker2013,Larger2013,Larger2015},
where delay-induced switched states were reported. Interestingly,
this set-up allows for switching between regular and irregular phases
reoccurring after approximately one delay time, see Fig.~\ref{fig:A-solution}(b)
for a numerically computed example trajectory showing this behavior,
together with the corresponding feedback function. In analogy to systems
of coupled oscillators \cite{Klinshov2017} and spatially extended
systems \cite{YanchukGiacomelli2017}, the authors refer to them
as \textit{chimera states }\cite{Abrams2004,Kuramoto2002} in Delay
Differential Equations. For numerical purposes, throughout the paper,
we choose Mackey-Glass like \cite{Mackey1977} feedback functions
$f_{1}(\xi)=1.2\xi/(1+\xi{}^{4})$ and $f_{2}(\xi)=2\xi/(1+(\xi-0.8)^{4})$,
which are capable to reproduce the qualitative dynamical behavior
of the physical set-up, yet are easier to manipulate. 

The paper is organized as follows: Sec.~\ref{sec:prelim} collects
preliminary results on scalar positive delayed feedback equations
and establishes the necessary vocabulary. After having these premises
set up, Sec.~\ref{sec:balance} lays out the conditions for the existence
of delay induced states. Sections.~\ref{sec:set-up}\textendash \ref{sec:coex}
contain more technical results. In particular, Sec.~\ref{sec:set-up}
introduces the mathematical framework for delay differential equations
and establish certain basic properties of (\ref{eq:def-x})\textendash (\ref{eq:def-y}).
In Sec.~\ref{sec:balance}, we establish the concept of a balance
point of delay-induced switched states. Section~\ref{sec:transition}
provides more insight into the mechanisms behind the emergence of
switched states close to the balance point. In Secs.~\ref{sec:profile}\textendash \ref{sec:transition},
we show that show that the formal limit system for $\varepsilon=0$
and $\eta=0$ explains the profile of delay-induced switched states
and another class of solutions that coexist. Our rigorous results
are given in the form of propositions and lemmas: the proofs are included in the supplementary material.
\begin{figure}[!]
\begin{centering}
\includegraphics[width=0.95\textwidth]{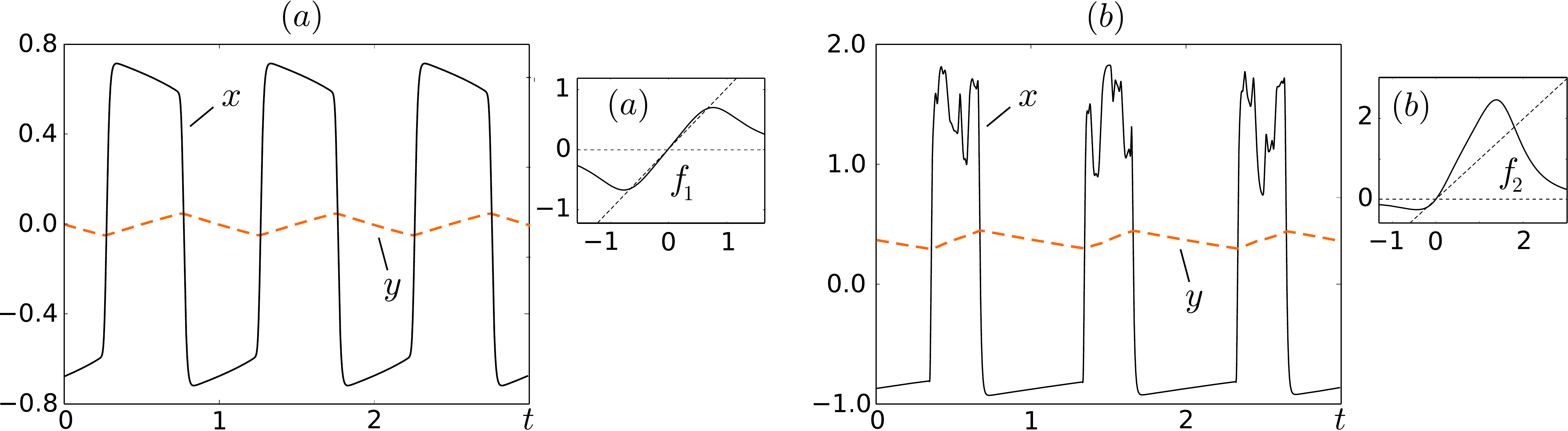}
\par\end{centering}
\caption{\label{fig:A-solution}(color online) Trajectories $(x,y)$ of (\ref{eq:def-x})\textendash (\ref{eq:def-y})
for parameter values $\varepsilon=0.01$, $\eta=0.3$, nonlinearities
(a)~$f_{1}(x)=1.2x/(1+x{}^{4})$, (b)~$f_{2}(x)=2x/(1+(x-0.8)^{4})$;
and suitable initial conditions. }
\end{figure}

\section{Positive Delayed Feedback \label{sec:prelim}}

To set the stage, allow us to back up a little and build some intuition.
Let us consider the simpler case of a scalar delayed feedback equation
\begin{align}
\varepsilon x^{\prime}(t) & =-x(t)+f(x(t-1)),\label{eq:def-x-0}
\end{align}
obtained from (\ref{eq:def-x})\textendash (\ref{eq:def-y}) by setting
$\eta$ and $y(0)$ to zero. Equations of type~(\ref{eq:def-x-0})
have been studied extensively in the literature. They are known to
cause a wide range of dynamical phenomena from oscillatory, to excitable
and chaotic behavior \cite{AnderHeiden1983}; in a variety of processes
such as physiological control systems \cite{Mackey1977}, nonlinear
optics \cite{Ikeda1979}, population dynamics \cite{Gurney1980}
and neuroscience \cite{Marcus1989}. 

For the purpose of this paper, we commit to a specific case, where
$f$ is continuously differentiable and 
\begin{itemize}
\item[\textbf{(H1)}]  $f(\xi)=\xi$ possesses exactly three solutions: $\xi^{-}<0$, $\xi=0$,
$\xi^{+}>0$,  and $f(\xi)\neq\xi$ otherwise.
\item[\textbf{(H2)}]  $f$ satisfies the positive feedback condition in some bounded but
sufficiently large interval: $\xi f(\xi)>0$ for all $\xi\in[\chi^{-},\chi^{+}]\backslash\{0\}$,
where $\chi^{+}=\max_{\xi\in I}f\left(\xi\right)$, $\chi^{-}=\min_{\xi\in I}f\left(\xi\right)$
and $I=[\xi^{-},\xi^{+}]$.  
\end{itemize}
Figure~\ref{fig:A-solution} shows examples of functions satisfying
properties (H1) \textendash{} (H2).

Let us consider an initial function $x_{0}(t)\in[\chi^{-},\chi^{+}]$,
for $t\in${[}-1,0{]}, taking positive as well as negative values.
For example, choose any slice of length one of the $x$-component
of the switching solution shown in Fig.~\ref{fig:A-solution}(b).
It can be shown that (H1) and (H2) imply $x(t)\in[\chi^{-},\chi^{+}]$
for all $t\geq0$ \cite{Rost2007}. Note that each switching between
two different phases contains a point, at which this function changes
sign. Under conditions (H1) and (H2), the solution to (\ref{eq:def-x-0})
has a remarkable property: The number of sign changes counted on intervals
of an appropriate length (that may be slightly larger than one) cannot
increase \cite{Cao1990,Mallet-Paret1996}. As a direct consequence,
the number of phases counted in this way cannot increase either. Their
width however, is subject to change; it might very well decrease until
a point at which its enclosing sign changes merge and the phase disappears.
We refer to this process as the \textit{coarsening} of a phase \cite{Giacomelli2012a}.
It is a spatiotemporal phenomenon best visualized in a spatiotemporal
plot \cite{YanchukGiacomelli2017}, see Fig.~\ref{fig:drift-charted-0}(a).
Here, the solution is sliced into subsequent segments of appropriate
length (about one), and the slice's values are then plotted with respect
to their counting number on the vertical axis, encoded in a color
map, see Ref. \cite{YanchukGiacomelli2017} for details. Fig.~\ref{fig:drift-charted-0}(a)
shows the temporal evolution of each phase in color/gray-scale, separated
by their sign changes in black. Interestingly, the amount with which
the position of a sign change alters from one segment to the next
seems independent of the phase width; as long as it is sufficiently
large \cite{Giacomelli2013}. We refer to this constant trend as
the \textit{drift} of this sign change. If this sign change has positive
slope, we refer to the corresponding drift as an $up$-drift $\delta^{\text{up}}$;
analogously we use the term \textit{down}-drift $\delta^{\text{down}}$.
Generally, up-drift and down-drift along a solution do not coincide,
leading to the coarsening of phases. 

This behavior is very well studied when $f$ is monotone \cite{Krisztin2008}.
In this case, the long term-behavior of solutions to (\ref{eq:def-x-0})
consists of only equilibrium solutions, unstable periodic orbits,
and homoclinic/heteroclinic connections between them \cite{Smith1987,Mallet-Paret1996a}.
Generically, solutions converge to equilibrium, yet with possibly
complicated transient behavior as the number of (unstable) periodic
solutions is large for small $\varepsilon$ \cite{Yanchuk2009}.
The dynamics is largely determined by the equilibria tugging at the
solution until the ``stronger'' wins. When $f$ is monotone, the
up- and down-drift along the solution can be computed explicitly,
see Ref. \cite{Grotta-Ragazzo1999,Grotta-Ragazzo2010} and references
therein. There, it is shown that two neighboring sign changes interact
weakly over exponentially small tails, and therefore, they may appear
independent numerically. The drift of the sign changes itself goes
to zero as $\varepsilon\to0$, whereas $d^{\text{up}}=\delta^{\text{up}}/\varepsilon$
and $d^{\text{down}}=\delta^{\text{up}}/\varepsilon$ convergence
to a bounded constant. If additionally $f$ is an odd function, the
situation is so well-adapted that up- and down-drift coincide to leading
order in $\varepsilon$, allowing for super-transient solutions with
transient times scaling as $e^{-c/\varepsilon}$, for some $c>0$.
We call any function $f$ with the property that up- and down-drift
coincide \textit{balanced }(not necessarily monotone or odd).

We want to remark that the case of negative delayed feedback has received
equal attention \cite{Fiedler1989}, and a lot of techniques for
positive delayed feed are motivated by the techniques developed for
the negative feedback case\cite{Chow1989,Lin1986,Nizette2003,Nizette2004, Wattis2017}. 

Note that our example $f=f_{1}$ in Fig.~\ref{fig:A-solution}(a)
is odd and restriction of $f_{1}$ to the values of the solution is
monotone. As a direct consequence, $f_{1}$ is balanced. It is easy
to show that continuous differentiability of $f$ implies that $f$,
when shifted along its graph by a small amount $f\mapsto f^{b}=f(\cdot+b)-f(b)$,
still satisfies (H1)-(H2). As a result $f_{1}$ has the following
property:
\begin{itemize}
\item[\textbf{(H3)}]  There exist $\beta\in\mathbb{R}$, and $r>0$ such that $f^{b}(\xi)=f(\xi+b)-f(b)$
satisfies (H1)\textendash (H2) for all $b\in[\beta-r,\beta+r]$ and
$f^{\beta}$ is balanced. We call $\beta$ the \textit{balance
point}.
\end{itemize}
In particular, $\beta=0$ for $f=f_1$.  Now obviously, our second example $f_{2}$ is neither odd
or monotone, nor is it balanced, as the corresponding solution displays
coarsening, see Fig.~\ref{fig:drift-charted-0}(a). Nevertheless,
it is straightforward to check that $f_{2}$ satisfies (H1)\textendash (H2).
In addition, we show numerically that $f_{2}$ satisfies (H3). 

Let us now vary $y(0)$ as a parameter and consider the equation
\begin{align}
\varepsilon x^{\prime}(t) & =-x(t)-y(0)+f(x(t-1)),\label{eq:def-x-0-1}
\end{align}
with $y(0)$ not necessarily zero. For sufficiently small $y(0)$,
the transformation $x^{b}(t)=x(t)-b,$ $f^{b}(\xi)=f(\xi+b)-f(b),$
where $b$ is the solution of $\xi=f(\xi)-y(0)$ with the smallest
modulus, takes Eq.~(\ref{eq:def-x-0-1}) into Eq.~(\ref{eq:def-x-0})
and our analysis above applies. 

We investigate numerically how the spatiotemporal behavior of (\ref{eq:def-x-0-1})
changes as we increase $y(0)$. In the example in Fig.~(\ref{fig:drift-charted-0})(a),
we observe that for $y(0)=0.2$ the difference of up- and down-drift
of the solution is smaller, and coarsening occurs at a later time.
This suggest that as we increase $y(0)$ the difference between up-
and down- drift further decreases. We show this numerically in Fig.~(\ref{fig:drift-charted-0})(c)
and (d). In fact, the values of the up- and down- drift change continuously
until they coincide at the balance point $y(0)=\beta\approx0.514$,
see Fig.~\ref{fig:drift-charted-0}(e). For larger values, the
order of the drifts is reversed and coarsening leads to another 'winning'
phase of the solution Fig.~\ref{fig:drift-charted-0}(d).

Note that throughout this process of the change of $y(0)$, the function
$f^{y(0)}$ satisfies (H1)-(H2), and as a result, $f_{2}$ satisfies
(H3). It is now intuitively clear, that close to the balance point
system (\ref{eq:def-x})\textendash (\ref{eq:def-y}), as a (small)
perturbation of (\ref{eq:def-x-0-1}) exhibits a long transient solution
with initial conditions close to the balance point. In fact, in the
next section we provide strong arguments in favor of these solution
not being mere transients anymore, but delay-induced switched states.
We conjecture that conditions (H1)-(H3) are necessary conditions on
$f$ to allow for such states.

\begin{figure}[t]
\centering{}\includegraphics[width=0.88\linewidth]{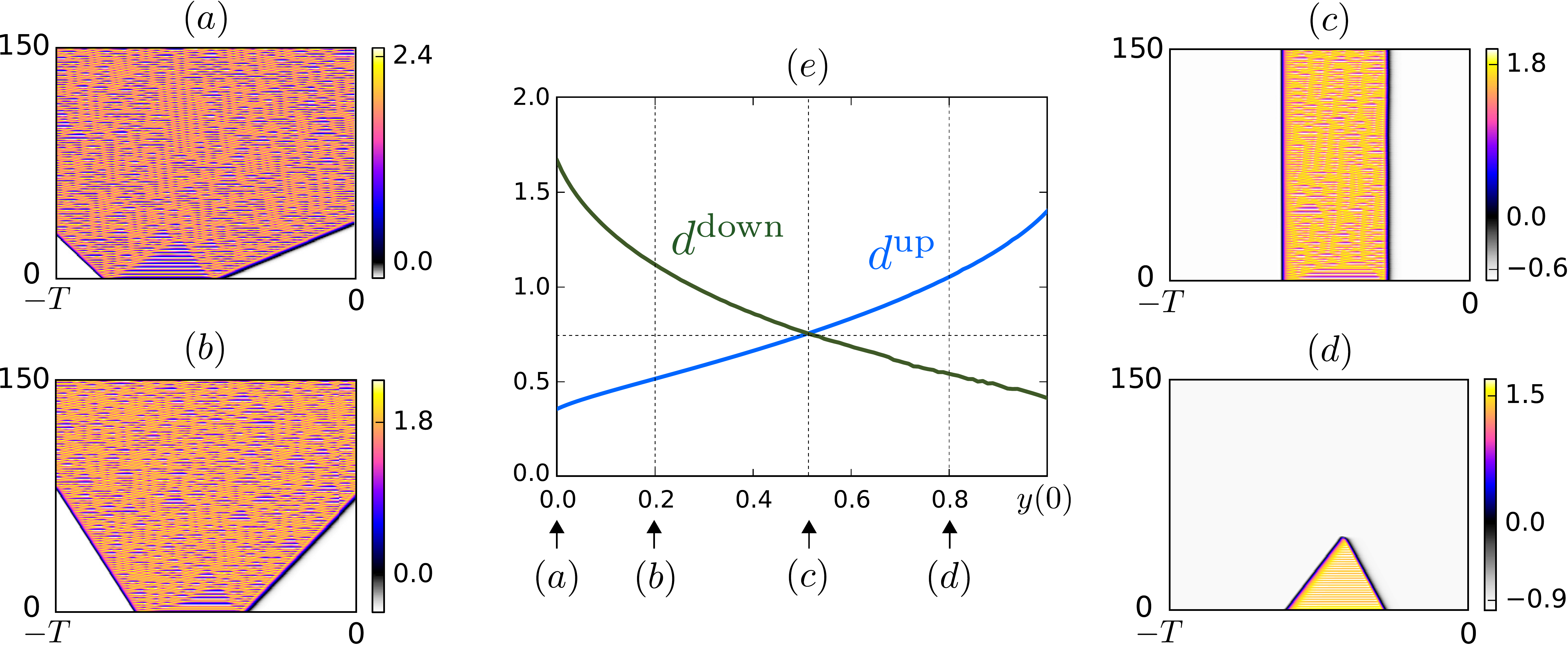}\caption{\label{fig:drift-charted-0} (color online) Numerical computation of the drift $d^{\text{up}}=\delta^{\text{up}}/\varepsilon$
of up (blue) and $d^{\text{down}}=\delta^{\text{down}}/\varepsilon$
of down (green) transitions layers for fixed $\varepsilon=0.01$,
$f_{2}(x)=2x/(1+(0.8-x)^{4})$, and initial function $x(t)=x_{0}(t)$,
$t\in[-1,0]$ with two sign changes (specified in the text). Panel
(a)-(d) show the spatiotemporal plot of the solution to (\ref{eq:def-x-0-1})
for for $T=1+0.755\varepsilon$ and different values of $y(0)$. Panel
(e) shows $d^{\text{up}}$ and $d^{\text{down}}$ with varying $y(0)\in[0,1]$. }
\end{figure}

\section{Main result: balance point and delay-induced
switched states \label{sec:balance}}

As we will show in Sec.~\ref{sec:set-up}, all solutions of (\ref{eq:def-x})\textendash (\ref{eq:def-y})
for $\eta>0$ that are not converging to or identically zero are oscillating,
i.e. its first component possesses infinitely many sign changes for
$t>0$ (Propositions~\ref{prop:zero-sol} and~\ref{prop:oscillatory}).
We are interested in oscillatory solutions that satisfy the following
definition motivated by the spatiotemporal plot in Fig.~\ref{fig:drift-charted-0}(c). 
\begin{defn*}[delay induced switched state]
\label{def:main-1} We refer to a solution $x\in C([-1,\infty),\mathbb{R})$
of (\ref{eq:def-x})\textendash (\ref{eq:def-y}) as a \emph{delay-induced
switched state}, if there exist $1\le T\le2$ and $0\le s\le1$, such
that $x(t)$ has a constant even number of sign changes on each interval
$[nT+s,\left(n+1\right)T+s]$ for all $n\geq0$. 
\end{defn*}
Additionally, we refer to $\delta=T-1$, as the \emph{drift} of a
delay induced switched state. It is easy to show that $\delta\to0$
as $\varepsilon\to0$. For this reason, in our numerical exploration,
we show the rescaled value $d=\delta/\varepsilon$ which converges
to a constant as $\varepsilon\to0$. See more details on the drift
in \cite{Giacomelli2012a,Giacomelli2013,YanchukGiacomelli2017}.

In a neighborhood of the balance point $\beta$ of system (\ref{eq:def-x-0-1}),
it is possible to obtain delay-induced switched states for (\ref{eq:def-x})-(\ref{eq:def-y}).
This can be explained as follows: If $y(t)<\beta$, the coarsening
of the solution leads to an increase of the average value $\frac{1}{T}\int_{-T}^{0}x(t+s)ds$
of $x(t)$ and therefore, to an increase of $y(t)=y(t-T)+\eta\int_{-T}^{0}x(t+s)ds$
towards the value $\beta$. Analogously, for $y(t)>\beta$, the average
$\frac{1}{T}\int_{-T}^{0}x(t+s)ds$ decreases as well as $y(t)$.
This allows for the ``dynamic stabilization'' of the delay-induced
switched states.

Figure~\ref{fig:drift-charted} shows numerical study of the up-
and down-drifts for the $x$-component of (\ref{eq:def-x})-(\ref{eq:def-y})
for different values of $\eta$ and $y(0)$. As the spatiotemporal
plots in Figs.~\ref{fig:drift-charted}(a,b,c1,c2,c3,d) show, the
coarsening may still occur for certain parameters as in Figs.~\ref{fig:drift-charted}(a,b,d),
while switched states are observed for the other parameters as in
Figs.~\ref{fig:drift-charted}(c1,c2,c3). The conditions, where switched
states exist, i.e. where the up- and down-drifts coincide $\delta^{+}=\text{\ensuremath{\delta}}^{-}=\delta$,
are shown as the gray region in Figs.~\ref{fig:drift-charted}(e,f).
The existence region has the form of a cone attached to a balance
point $y(0)=\beta$ at $\eta=0$. That is, the transient switched state for
$\eta=0$ exists only in the balance point, and any deviation of the
initial condition $y(0)$ from $\beta$ would lead to the coarsening.
With increasing $\eta$, the allowed range of $y(0)$, where delay-induced switched
states exist, increases. This indicates that such states become stable
for $\eta>0$, and in order to reach them, one should initiate $y(0)$
in the neighborhood of the balance point $\beta$, while the size
of this neighborhood grows with $\eta$. We want to remark that this mechanism resembles a locking cone with respect to the delayed self-feedback, and indeed, similar solutions have been observed in forced systems \cite{Lefebvre2012}.

\begin{figure}[!]
	\centering{}\includegraphics[width=0.9\linewidth]{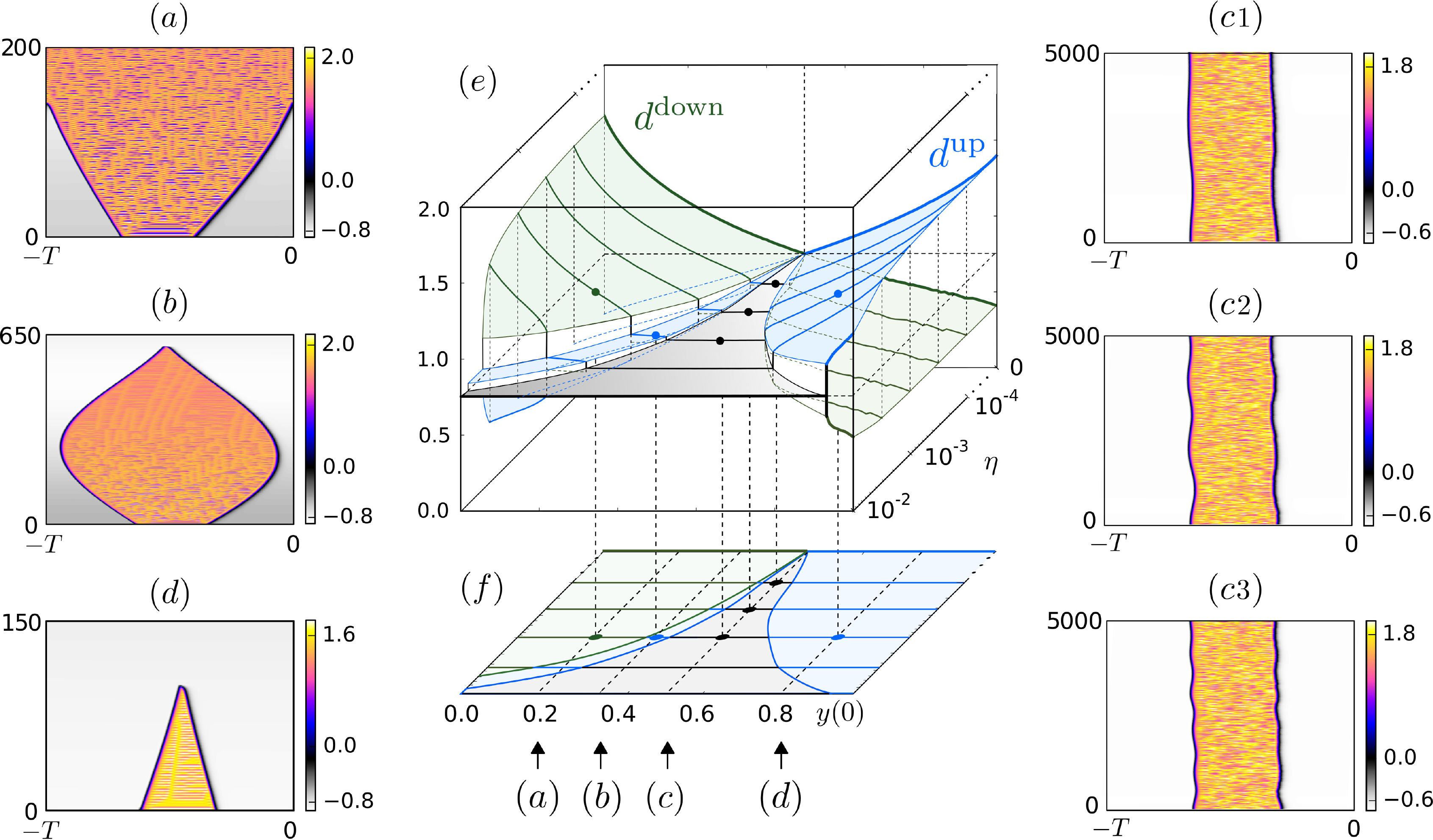}\caption{\label{fig:drift-charted}(color online) Up-drift $d^{\text{up}}=\delta^{\text{up}}/\varepsilon$
		(blue) and down-drift $d^{\text{down}}=\delta^{\text{down}}/\varepsilon$
		(green) of solutions for fixed $\varepsilon=0.01$, $f_{2}(x)=2x/(1+(0.8-x)^{4})$,
		$T=1+0.514\varepsilon$ and initial function $x(t)=x_{0}(t)$, $t\in[-1,0]$
		with two sign changes (specified in the text). Panels (a),(b),(c1),(d)
		show spatiotemporal plots of the solution for $\eta=0.001$ and (a)
		$y(0)=0.2$, (b) $y(0)=0.33$, (c1) $y(0)=0.514$ and (d) $y(0)=0.8$.
		Panels (c2),(c3) show spatiotemporal plots of the solution for $y(0)=0.514$
		and (c2) $\eta=0.0003$ and (c3) $y(0)=0.0001$, (c1) $y(0)=0.514$
		and (d) $y(0)=0.8$. Values of $d^{\text{up}}$ and $d^{\text{down}}$
		varying the feedback parameter $\eta\in[0,0.01]$ and initial condition
		$y(0)\in[0,1]$. Panel (e): numerically computed parameter regions
		(and individual points), where $d^{\text{up}}$ and $d^{\text{down}}$
		coincide are shown in gray. Panel (f): projection (from above) of
		Panel (e) onto the $(y(0),\eta)$-plane. Colors encode: $d^{\text{up}}<d^{\text{down}}$
		(blue), $d^{\text{up}}=d^{\text{down}}$ (gray), $d^{\text{up}}>d^{\text{down}}$
		(green).}
\end{figure}

Our rigorous analysis in the following sections shows that the solution
of (\ref{eq:def-x})-(\ref{eq:def-y}) can be obtained as a perturbation
of (\ref{eq:def-x-0-1}) (Lemma~\ref{lem:lemma-L-L-1}) and that
this perturbation contracts the distance of subsequent sign changes
to a critical distance (implied by Lemma~\ref{lem:L-zeros}). The
``largeness'' of this correcting effect strongly depends on $\eta$.
Too far from the balance point, it is too weak to prevent the coarsening
of a phase, see Fig.~\ref{fig:drift-charted}(a). Similarly, the
time scale of this process is of importance. Closer to the balance
point the influence of $\eta$ is strong enough to prevent the coarsening
of one phase, but then it can lead to the coarsening towards the other,
see Fig.~\ref{fig:drift-charted}(b). We also show that the profile
of the switched state can be obtained from a reduced system (Proposition
\ref{prop:per-1}).

Supported by our rigorous analysis of the ``stabilization'' mechanism
described above as well as numerical evidences in Fig.~\ref{fig:drift-charted}
we make the following conjecture. 
\begin{conjecture}
\label{thm:main-1-1} Let $f\in C^{1}(\mathbb{R})$ satisfy $(H1)$\textendash $(H3)$
with balance point $\beta$. Then, for all $\varepsilon>0$ and $\eta>0$
sufficiently small, all initial conditions with $y(0)$ sufficiently
close to $\beta$, and $x(\theta)\in[\chi^{-},\chi^{+}]$, for $-1\le\theta\le0$,
with at least one sign changes lead to a delay-induced switched state. 
\end{conjecture}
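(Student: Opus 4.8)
The plan is to treat (\ref{eq:def-x})--(\ref{eq:def-y}) as a genuine slow--fast problem in which $y$ is the slow variable and the scalar equation (\ref{eq:def-x-0-1}) with frozen parameter $y(0)=y$ is the fast--intermediate layer problem, and then to promote the heuristic ``dynamic stabilization'' argument of Sec.~\ref{sec:balance} into a rigorous contraction statement. First I would invoke Lemma~\ref{lem:lemma-L-L-1} to write every solution of (\ref{eq:def-x})--(\ref{eq:def-y}) on an interval of length $O(1)$ as an $O(\eta)$--perturbation of the corresponding solution of (\ref{eq:def-x-0-1}) with $y(0)$ replaced by the current value of $y$. Since $\dot y=\eta x$ and $|x|\le\max(|\chi^{-}|,|\chi^{+}|)$, the slow variable moves by at most $O(\eta)$ over one such interval, so freezing $y$ is legitimate to leading order. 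On this frozen problem the scalar theory under (H1)--(H2) guarantees, together with Propositions~\ref{prop:zero-sol} and~\ref{prop:oscillatory}, that solutions oscillate, that the number of sign changes per window cannot increase, and that each sign change drifts with an up- or down-drift $\delta^{\text{up}}(y),\delta^{\text{down}}(y)$ depending smoothly on $y$ and coinciding precisely at $y=\beta$ by (H3).

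Second, I would set up the sign-change return map. Following the construction underlying Lemma~\ref{lem:L-zeros}, I track the ordered positions of sign changes on successive segments of length $T$ together with the value of $y$ sampled at the segment boundary. Lemma~\ref{lem:L-zeros} supplies the crucial ingredient: the perturbation pulls the distance between consecutive sign changes toward a critical value, so that phase widths neither blow up nor (generically) collapse within a single step. The width of each phase then evolves by the accumulated difference $\delta^{\text{up}}(y)-\delta^{\text{down}}(y)$, which is $O(\varepsilon)$ and changes sign as $y$ crosses $\beta$.

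Third --- and this is where the stabilization is made quantitative --- I would close the loop through the slow variable. Integrating $\dot y=\eta x$ over one segment gives $y(t)-y(t-T)=\eta\int_{t-T}^{t}x(s)\,ds=\eta T\,\bar x+o(\eta)$, and the segment average $\bar x$ is an increasing function of the signed imbalance between positive and negative phase widths. Combining this with the previous step yields a reduced two-timescale map for the pair (phase imbalance, $y$) whose linearization at $(0,\beta)$ has negative-feedback structure: a deviation $y-\beta>0$ forces $\delta^{\text{down}}>\delta^{\text{up}}$, shrinking the positive phase, lowering $\bar x$, and driving $y$ back down, and symmetrically for $y-\beta<0$. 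Verifying the transversality condition $\partial_{y}(\delta^{\text{up}}-\delta^{\text{down}})|_{y=\beta}\neq0$ makes $(0,\beta)$ an attracting fixed point, with a basin whose $y(0)$-width scales with $\eta$ --- exactly the cone of Fig.~\ref{fig:drift-charted}(f). An invariant-region (or Schauder) argument in the configuration space of sign changes then confirms that trajectories starting with $y(0)$ in this basin and at least one sign change remain trapped, retaining a constant even number of sign changes per segment for all $t\ge0$, which is the definition of a delay-induced switched state; Proposition~\ref{prop:per-1} simultaneously pins down the limiting profile.

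The main obstacle is the competition of the three timescales and its uniformity. The uncorrected coarsening time of a phase is $O(1/\delta)=O(1/(\varepsilon d))$ segments, while $y$ relaxes toward $\beta$ on the slow scale $1/\eta$; one must prove that the correction acts \emph{before} coarsening completes, uniformly over exponentially long transients, and that the exponentially weak tail interaction between neighbouring sign changes --- the very mechanism that makes $\delta^{\text{up}},\delta^{\text{down}}$ well defined --- does not accumulate so as to destroy the balance. Equally delicate is upgrading the drift functions $\delta^{\text{up}}(y),\delta^{\text{down}}(y)$, so far only computed numerically for the non-monotone, non-odd $f_{2}$, to rigorous, smoothly $y$-dependent quantities with the required transversal crossing at $\beta$: the explicit Grotta--Ragazzo computations are available only in the monotone/odd case and would have to be extended. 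These two difficulties are, I expect, precisely why the statement is offered as a conjecture rather than a theorem.
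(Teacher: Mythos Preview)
The paper does not prove this statement: it is explicitly labeled a \emph{Conjecture}, and the authors write that ``a rigorous proof of the existence of delay-induced states is beyond the expository character of this article''. So there is no paper proof to compare against. What the paper offers is exactly the heuristic you reconstruct --- the dynamic-stabilization argument near the balance point --- together with the supporting Lemmas~\ref{lem:lemma-L-L-1} and~\ref{lem:L-zeros} and the numerical evidence of Fig.~\ref{fig:drift-charted}.

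Your outline is faithful to that heuristic and correctly identifies the two genuine obstructions the paper leaves open: (a) uniform control over the competition between the $O(1/\varepsilon)$ coarsening timescale and the $O(1/\eta)$ relaxation of $y$, so that the slow correction acts before a phase collapses; and (b) the need for rigorous, smoothly $y$-dependent drift functions $\delta^{\text{up}}(y),\delta^{\text{down}}(y)$ with a transversal crossing at $\beta$, which the existing literature supplies only for monotone or odd $f$. Your closing paragraph is exactly right: these are the gaps that make the statement a conjecture rather than a theorem, and your proposal should be read as a plausible roadmap, not a proof.
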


An initial condition without sign changes leads to oscillatory solutions
of relaxation type, similar to that described in \cite{Kouomou2005,TallaMbe2015}.
We have included a numerical exploration and discussion of this type
of oscillatory behavior in Sec.~\ref{sec:coex}.

\section{Basic properties of system (\ref{eq:def-x})\textendash (\ref{eq:def-y})
\label{sec:set-up}}

Equations (\ref{eq:def-x})\textendash (\ref{eq:def-y}) define a
(semi-)dynamical system with phase space $C:=C\left(\left[-1,0\right],\mathbb{R}^{2}\right)$,
that is the Banach space of continuous functions taking values in
$\mathbb{R}^{2}$, with norm $\left\Vert \phi\right\Vert =\sup_{\theta\in\left[-1,0\right]}\left|\phi\left(\theta\right)\right|$,
$|\cdot|$ being the Euclidean norm in $\mathbb{R}^{2}$. Given an
initial function $\phi\in C$, the solution $z(t,\phi)=(x(t,\phi),y(t,\phi)),$
$t\geq0,$ of the initial value problem
\begin{align}
\frac{dz}{dt}(t) & =A^{\varepsilon,\eta}z(t)+F\left(z(t-1)\right),\,z_{0}=\phi,\label{eq:def-z}
\end{align}
where $A^{\varepsilon,\eta}=\left(\begin{array}{cc}
-\frac{1}{\varepsilon} & -\frac{1}{\varepsilon}\\
\eta & 0
\end{array}\right),\,F(\zeta)=\left(\begin{array}{c}
f(\zeta)\\
0
\end{array}\right),$ exists and is unique \cite{Hale1993}. Here, we use the standard
notation $z_{t}(\phi)=z(t+\theta;\phi),$ for $\theta\in[-1,0],$
respectively for $x$ and $y$, to refer to the state of the system
$z_{t}\in C$ at time $t\in[0,\infty)$, whenever it is convenient.
Note that in order to have full invariant manifolds theory available,
we would need to extend this concept of solution to the state space
$\mathbb{R}^{2}\times L^{\infty}(\left[-1,0\right],\mathbb{R}^{2})$
\cite{Diekmann1995}, which is not strictly necessary in our case.
The solution defines a $C^{1}$ semi-flow $\Phi_{t}:\phi\mapsto x_{t}(\phi)$
on $C$ \cite{Hale1993}. Recall that the \textit{$\omega$-limit
set} $\omega(\phi)$ of $\phi\in C$ under the semi-flow $\Phi_{t}$
is defined to be 
\[
\omega(\phi)=\{\psi\in C\ |\ \Phi_{t_{n}}\phi\to\psi\mbox{ for some sequence }t_{n}\to\infty\}.
\]
If $z_{t}(\phi)$ exists for all $t$, $z_{t}(\phi)$ is $C^{1}$
with a uniform bound on its derivatives for all $t\ge t_{0}+1$. By
the Arzela-Ascoli Theorem then, $\omega(\phi)$ is nonempty and compact
in $C$ (with its $C^{0}$ norm). We discuss some of the objects in
$\omega(\phi)$ of (\ref{eq:def-x})\textendash (\ref{eq:def-y}).
As $\Phi_{t}$ is $C^{1}$ we rely on the principle of linearized
stability to determine the asymptotic behavior of initial conditions
close to $\omega(\phi)$. 

For our further analysis, it is convenient to write the solution in
terms of the solution operator that takes solution segments that have
length $T=1+\delta$ slightly larger than one from one to the next.
Consider the solution segment $\phi_{n}(\theta):=z(nT+\theta)\in C_{T}^{1}:=C^{1}([-T,0],\mathbb{R}^{2}),$
$n\in\mathbb{N}$ and $\theta\in[-T,0],$ $1\leq T<2$. Then, (\ref{eq:def-z})
is equivalent to
\begin{align}
\frac{d}{d\theta}\phi_{n+1}(\theta) & =A^{\varepsilon,\eta}\phi_{n+1}(\theta)+\begin{cases}
F\left(\phi_{n}(\theta+\delta)\right), & -T<\theta<-\delta,\\
F\left(\phi_{n+1}(\theta-1)\right), & -\delta<\theta<0,
\end{cases}\label{eq:def-phi}\\ 
\phi_{n+1}(-T) & =\phi_{n}(0)\qquad\text{ for all }n\in\mathbb{N},\,\left.\phi_{0}\right|_{[-1,0]}=z_{0}\in C([-1,0],\mathbb{R}^{n}).\nonumber 
\end{align}
Eq.~(\ref{eq:def-phi}) defines a map given by $M_{T}^{\varepsilon,\eta}:C_{T}\to C_{T},$
$\psi\mapsto M_{T}^{\eta}\psi$ with
\begin{align}
\left[M_{T}^{\varepsilon,\eta}\psi\right](\theta) & = e^{A^{\varepsilon,\eta}(\theta+T)}\psi(-T)+\int_{-T}^{\min\left\{ \theta,-\delta\right\} }e^{A^{\varepsilon,\eta}(\theta-s)}F\left(\psi(s+\delta)\right)\text{d}s\label{eq:def-M}\\ 
 & +\int_{\min\left\{ \theta-1,-T\right\} }^{\theta-1}e^{A^{\varepsilon,\eta}(\theta-1-s)}F\left(e^{A^{\varepsilon,\eta}(s+T)}\psi(-T)+\int_{-T}^{s}e^{A^{\varepsilon,\eta}(\theta-\tilde{s})}\psi(\tilde{s}+\delta)\text{d}\tilde{s}\right)\text{d}s,\nonumber
\end{align}
which can be easily seen using the variations of constants formula.
The following two propositions collect basic properties of (\ref{eq:def-x})\textendash (\ref{eq:def-y}).
\begin{prop}[zero solution]
\label{prop:zero-sol} Let $\varepsilon,\eta>0$ and $f$ satisfy
(H1)-(H2). Then, 
\begin{enumerate}
\item[(i)] (\ref{eq:def-x})\textendash (\ref{eq:def-y}) has the unique equilibrium
solution $z\equiv0$. 
\item[(ii)] there exist \textup{$\bar{\varepsilon}>0$,} such that $z\equiv0$
is unstable for all \textup{$\varepsilon<\bar{\varepsilon}$}. At
\textup{$\varepsilon=\bar{\varepsilon}$}, the equilibrium undergoes
a Hopf bifurcation. More specifically, there exists $\varepsilon_{k}\to0$
as $k\to\infty$, such that for every $k$ there is a complex conjugated
pair of eigenvalues crossing the imaginary axis with nonzero speed.
\end{enumerate}
\end{prop}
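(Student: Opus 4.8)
The plan is to reduce both claims to the characteristic equation of the linearisation at the origin and then to track its purely imaginary roots as $\varepsilon$ varies with $\eta>0$ fixed and small. For (i), any equilibrium $(x_*,y_*)$ satisfies $\eta x_*=0$ and $-x_*-y_*+f(x_*)=0$; since $\eta>0$ the first equation forces $x_*=0$, and then $y_*=f(0)=0$ by (H1), so $z\equiv0$ is the unique equilibrium. For (ii) I linearise: with $a:=f'(0)$ the variational equation $\varepsilon\dot x=-x-y+a\,x(t-1)$, $\dot y=\eta x$ has characteristic equation
\[
\Delta(\lambda):=\varepsilon\lambda^{2}+\lambda+\eta-a\lambda e^{-\lambda}=0 .
\]
This is the one point where the hypotheses enter quantitatively: the simple crossing of the diagonal at the origin in the bistable configuration of Fig.~\ref{fig:A-solution} forces $a>1$ (the origin is the repelling fixed point of $\xi\mapsto f(\xi)$), and by the principle of linearised stability everything is read off from the zeros of the entire function $\Delta$.

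I next look for purely imaginary roots $\lambda=i\omega$, $\omega>0$. Splitting $\Delta(i\omega)=0$ into imaginary and real parts gives $\omega(1-a\cos\omega)=0$ and $-\varepsilon\omega^{2}+\eta-a\omega\sin\omega=0$, i.e.\ $\cos\omega=1/a$ (solvable because $a>1$) together with $\varepsilon=(\eta-a\omega\sin\omega)/\omega^{2}$. Writing $\alpha:=\arccos(1/a)\in(0,\pi/2)$, the admissible frequencies are $\omega=2\pi k\pm\alpha$. Along the branch $\omega_k:=2\pi k-\alpha$ ($k\ge1$) one has $\sin\omega_k=-\sin\alpha<0$, so
\[
\varepsilon_k=\frac{\eta+a\,\omega_k\sin\alpha}{\omega_k^{2}}=\frac{\eta}{\omega_k^{2}}+\frac{a\sin\alpha}{\omega_k}>0 ,
\]
and since the right-hand side is strictly decreasing in $\omega_k$ with $\omega_k\to\infty$, the $\varepsilon_k$ are strictly decreasing and $\varepsilon_k\to0$. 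For $\eta$ small (precisely $\eta<a\alpha\sin\alpha$) the complementary branch $\omega=2\pi k+\alpha$ gives $\varepsilon\le0$, so $\{\varepsilon_k\}_{k\ge1}$ are the only positive parameter values carrying an imaginary root; I set $\bar\varepsilon:=\varepsilon_1=\max_k\varepsilon_k$.

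Transversality follows from implicit differentiation of $\Delta(\lambda,\varepsilon)=0$: $d\lambda/d\varepsilon=-\lambda^{2}/\Delta_\lambda$ with $\Delta_\lambda=2\varepsilon\lambda+1+ae^{-\lambda}(\lambda-1)$. Using $e^{-i\omega_k}=1/a+i\sin\alpha$ one finds $\operatorname{Re}\Delta_\lambda=a\omega_k\sin\omega_k=-a\omega_k\sin\alpha\neq0$, so each $i\omega_k$ is a \emph{simple} root and
\[
\operatorname{Re}\frac{d\lambda}{d\varepsilon}\Big|_{\lambda=i\omega_k}=\frac{\omega_k^{2}\operatorname{Re}\Delta_\lambda}{|\Delta_\lambda|^{2}}=\frac{-a\,\omega_k^{3}\sin\alpha}{|\Delta_\lambda|^{2}}<0 ,
\]
i.e.\ a conjugate pair crosses the imaginary axis with nonzero speed, entering the right half-plane as $\varepsilon$ decreases through $\varepsilon_k$. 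Because the zeros of the exponential polynomial $\Delta$ are isolated and only finitely many lie in any right half-plane, the number $N(\varepsilon)$ of right-half-plane roots is piecewise constant and changes only at the $\varepsilon_k$; as every crossing is inward, $N(\varepsilon)\ge2$ for all $0<\varepsilon<\bar\varepsilon$, giving instability there. At $\varepsilon=\bar\varepsilon$ the pair $\pm i\omega_1$ is the unique imaginary root, is simple, and crosses transversally, so (after the routine nonresonance check $ik\omega_1\notin\Delta^{-1}(0)$ for $k\ge2$) the Hopf bifurcation theorem for retarded functional differential equations applies.

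The main obstacle is precisely this global root bookkeeping: I must show that for $\varepsilon>0$ the \emph{only} imaginary roots are the $i\omega_k$ (this is where smallness of $\eta$ is genuinely used) and that no root re-enters the left half-plane between consecutive $\varepsilon_k$, so that instability holds on all of $(0,\bar\varepsilon)$ and not merely just below $\bar\varepsilon$. A secondary, more clerical point is to pin down $a=f'(0)>1$ cleanly from (H1)--(H2).
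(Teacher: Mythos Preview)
Your argument is correct and takes a genuinely different route from the paper. The paper does not track the imaginary roots of $\Delta$ by hand; instead it rescales $\mu=\varepsilon\lambda$ to recast the problem as a delay equation with large delay $1/\varepsilon$ and then invokes the asymptotic spectral theory of Lichtner--Wolfrum--Yanchuk, which locates the pseudo-continuous (``weak'') spectrum via the curve $\gamma(\varphi)=-\tfrac12\log|Y(i\varphi)|$ and reads off instability and the Hopf cascade directly from $|f'(0)|>1$. That machinery absorbs your ``global root bookkeeping'' for free, including the sequence $\varepsilon_k\to0$, at the cost of importing an external result. Your explicit Hopf-curve computation is more elementary and self-contained, and even yields the $\varepsilon_k$ in closed form; the price is precisely the bookkeeping you flag --- ruling out the $2\pi k+\alpha$ branch and reverse crossings --- which, as you note, genuinely uses $\eta$ small (on that branch $\operatorname{Re}\Delta_\lambda>0$, so the crossing direction flips). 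On the point you call clerical: the paper also simply asserts that (H1)--(H2) imply $|f'(0)|>1$ without further argument, so you are not worse off there.
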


\begin{proof}
	This is a direct result of the asymptotic spectral properties of steady
	states of delay equations with one single discrete delay \cite{Lichtner2011}.
	We will not repeat the full analysis here. It is easy to see that
	system~(\ref{eq:def-x})\textendash (\ref{eq:def-y}) has the unique
	equilibrium solution $z\equiv0$. The behavior of solutions in its
	neighborhood is determined by the characteristic growth rates, which
	are solutions to the corresponding characteristic equation $0=\lambda\left(\varepsilon\lambda+1-f^{\prime}(0)e^{-\lambda}\right)+\eta$.
	Using the change $\mu=\varepsilon\lambda$, this equation reads as
	$0=\mu\left(\mu+1-f^{\prime}(0)Y(\mu/\varepsilon)\right)+\varepsilon\eta$,
	where $Y=e^{-\mu/\varepsilon}$, and the analysis of \cite{Lichtner2011}
	directly applies. In particular,the spectrum for small $\varepsilon$
	can be characterized by its asymptotic properties. The strong asymptotic
	spectrum is absent. The weak asymptotic spectrum is given by $\mu_{\text{weak}}(\varphi)=\varepsilon\gamma(\varphi)+i\varphi,$
	where $\gamma(\varphi)=-\frac{1}{2}\log|Y(i\varphi)|$ and 
	\[
	|Y(i\varphi)|=\frac{\left(\varepsilon\eta-\varphi^{2}\right)^{2}+\varphi^{2}}{f^{\prime}(0)^{2}\varphi^{2}}.
	\]
	(H1)-(H2) imply $|f^{\prime}(0)|>1$, such that $\max_{\varphi}\gamma(\varphi)>0$,
	i.e. the weak spectrum is unstable. Our assertion follows directly
	from \cite{Lichtner2011}.
\end{proof}

We have established that the zero equilibrium is unstable, with an
arbitrary large dimension of the unstable manifold. As a result, we
expect the deviation from it to grow exponentially for a generic small
perturbation. This destabilization is due to a cascade of Hopf bifurcations
as $\varepsilon\to0$, so one might expect periodic solutions to be
the simplest objects that occur. It is intuitively clear that for
$\eta>0$, the solution is forced to oscillate. A precise statement
is given in the following Proposition.
\begin{prop}[oscillatory solutions]
\label{prop:oscillatory}Let $\varepsilon,\eta>0$ and $f$ satisfy
(H1)-(H2). Then any solution $z(t,\phi)$, which does not converge
to zero, possesses the following properties: 
\begin{enumerate}
\item[(i)]  There exists $0<t<\infty$, such that $z^{1}(t,\phi)=0$. 
\item[(ii)]  The first component $z^{1}$of $z$ has infinitely many sign changes. 
\item[(ii)]  For all $t_{1}<t_{2}$, such that \textup{$z^{2}(t_{1},\phi)=z^{2}(t_{2},\phi)$},
it holds that$\int_{t_{1}}^{t_{2}}z^{1}(s,\phi)ds=0$.
\end{enumerate}
\end{prop}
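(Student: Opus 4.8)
The third assertion is immediate from the integral form of \eqref{eq:def-y}: for any $t_1<t_2$ one has $z^2(t_2,\phi)-z^2(t_1,\phi)=\eta\int_{t_1}^{t_2}z^1(s,\phi)\,ds$, so $z^2(t_1,\phi)=z^2(t_2,\phi)$ forces $\int_{t_1}^{t_2}z^1(s,\phi)\,ds=0$. The first two assertions I would deduce from a single claim in contrapositive form: \emph{if $z^1(\cdot,\phi)$ has eventually constant sign, then $z_t(\phi)\to0$}. If (i) failed, $z^1$ would be nonvanishing, hence of strict constant sign, on all of $(0,\infty)$; if (ii) failed, $z^1$ would have only finitely many sign changes and hence satisfy $z^1\ge0$ or $z^1\le0$ on some $[t_0,\infty)$. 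In either situation the claim gives $z_t(\phi)\to0$, contradicting the standing hypothesis that $z$ does not converge to zero.

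To prove the claim, suppose $x:=z^1(\cdot,\phi)\ge0$ on $[t_0,\infty)$ (the case $x\le0$ is symmetric). By \eqref{eq:def-y}, $y:=z^2(\cdot,\phi)$ is nondecreasing there. I would first invoke an a priori bound on solutions of \eqref{eq:def-x}--\eqref{eq:def-y} (the two-component analogue of the invariance of $[\chi^-,\chi^+]$ quoted in Sec.~\ref{sec:prelim}), which in turn yields the uniform bound on $\dot z$ used in the $\omega$-limit discussion above; this makes $x$ and $f(x(\cdot-1))$ bounded and $x,y$ uniformly continuous. Being monotone, $y$ either converges to a finite limit or tends to $+\infty$. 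The latter is excluded by \eqref{eq:def-x}: with $x$ and $f(x(\cdot-1))$ bounded, $y(t)\to+\infty$ would force the right-hand side $-x(t)-y(t)+f(x(t-1))\to-\infty$, hence $\dot x(t)\to-\infty$ and $x(t)\to-\infty$, contradicting $x\ge0$.

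Thus $y(t)\to y_\infty$ for some finite $y_\infty$, and $\int_{t_0}^{\infty}x(s)\,ds=(y_\infty-y(t_0))/\eta<\infty$. Since $x\ge0$ is uniformly continuous, integrability forces $x(t)\to0$, and then $x(t-1)\to0$ with $f(0)=0$ by (H1). Passing to the limit in \eqref{eq:def-x}, the right-hand side tends to $-y_\infty$, so $\varepsilon\dot x(t)\to-y_\infty$; as $x$ is bounded, its derivative cannot tend to a nonzero constant, whence $y_\infty=0$. Therefore $x(t),y(t)\to0$ and, by uniform continuity, $z_t(\phi)\to0$ in $C$, proving the claim and with it (i) and (ii).

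The step I expect to be the main obstacle is precisely the a priori bound used at the outset: one must show that solutions of the full system \eqref{eq:def-x}--\eqref{eq:def-y} remain in a bounded forward-invariant region, so that $f(x(\cdot-1))$ is controlled, since the scalar estimate of Sec.~\ref{sec:prelim} does not directly account for the coupling to $y$ through the $-y$ term. Once this bound is secured, the monotonicity of $y$ and the sign bookkeeping above are routine, and the disposal of the divergent alternative $y\to+\infty$ follows immediately from the $-y$ term driving $x$ negative.
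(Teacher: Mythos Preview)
Your argument and the paper's share the same engine: sign-definiteness of $x=z^1$ makes $y=z^2$ monotone, and the $-y$ term in \eqref{eq:def-x} then dominates. The packaging differs. The paper proves (i) by direct contradiction---it asserts that $z^1>0$ on $[t_0,\infty)$ makes $z^1$ \emph{strictly bounded away from zero}, whence $z^2\to+\infty$, whence $\varepsilon(z^1)'<-z^1-z^2+M<-c$ for large $t$---and then declares (ii) a ``direct consequence'' of (i). Your contrapositive (eventually constant sign $\Rightarrow z_t\to0$) treats (i) and (ii) in a single stroke, and your case split $y\to y_\infty$ versus $y\to+\infty$, with the integrability-plus-uniform-continuity step forcing $x\to0$ and then $y_\infty=0$, actually covers the scenario $z^1>0$ with $\liminf z^1=0$ that the paper's ``bounded away from zero'' shortcut does not address. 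As for the a priori bound you flag as the main obstacle: you are right that it is the real gap, and the paper's proof relies on it just as silently through the unexplained constant $M$ bounding $f(x(\cdot-1))$; neither argument is complete without it.
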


\begin{proof}
	(i) Suppose that the assertion is false, i.e. $z^{1}(t,\phi)>0$ for
	all $t\in[t_{0},\infty)$; the other case of different sign is completely
	analogous. Since $z^{1}(t,\phi)$ is strictly bounded away from zero,
	$z^{2}(t,\phi)$ is strictly monotonously increasing with $z^{2}(t,\phi)=\eta\int_{0}^{t}z^{1}(s,\phi)ds$.
	Thus, there exists $t_{1}$ such that for all $t>t_{1}$, $(z^{1})^{\prime}(t,\phi)<-z^{1}(t,\phi)-z^{2}(t,\phi)+M<-c$
	for some $c>0$. This implies that $z^{1}(t,\phi)$ changes sign leading
	to a contradiction. (ii) is a direct consequence of (i). (iii) Direct
	integration of \eqref{eq:def-y} gives the result, $0=z^{2}(t_{2},\phi)-z^{2}(t_{1},\phi)=\eta\int_{t_{1}}^{t_{2}}z^{1}(s,\phi)ds$.
\end{proof}

As the zero solution is unstable, we have thus established that every solution, except the zero solution and the solutions that lie on the stable manifold of the zero solution are oscillatory for all $t\geq0$.


\section{Stabilization mechanism of delay-induced switched states \label{sec:transition}}

As we have indicated in Sec.~\ref{sec:balance}, initial conditions
'close' the balance point lead to delay-induced switched states, whereas
those too far from the balance point, may appear as delay-induced
switched states at first, yet coarsen after some possibly long transient
time, and as a result no delay-induced switching can be observed anymore.
This can be explained by a competition of the fast $\varepsilon-$dynamics
and slow $\eta$-dynamics. In order to investigate these processes
independently, we 'split off' the $\eta$-part of the dynamics. In
doing so, we provide the conceptional mechanism for the stabilization
of delay-induced switched states; the more involved arguments being
given as self-contained lemmas. Recall from Sec.~\ref{sec:set-up}
that the solution to (\ref{eq:def-x})\textendash (\ref{eq:def-y})
can be written in integral form
\begin{align}
x(t) =~ & x(t_{0})e^{-(t-t_{0})/\varepsilon} \nonumber \\
&+\int_{t_{0}}^{t}\frac{e^{-(t-s)/\varepsilon}}{\varepsilon}\left(f(x(s-1))-y(t_{0})\right)ds-\eta\int_{t_{0}}^{t}\frac{e^{-(t-s)/\varepsilon}}{\varepsilon}\left(\int_{t_{0}}^{s}x(\tilde{s})d\tilde{s}\right)ds,\label{eq:x-slow-bound-1}\\
y^{\prime}(t) = ~ & y(t_{0})+\eta\int_{t_{0}}^{t}x(s)ds,\label{eq:y-slow-bound-1}
\end{align}
which can be readily checked using the variation of constants formula.
Our motivation is simple, find a transformation such that the resulting
solution is independent of $\eta$. A closer look at system (\ref{eq:x-slow-bound-1})-(\ref{eq:y-slow-bound-1})
suggests to subtract the respective last terms $-\eta\int_{t_{0}}^{t}\frac{e^{-(t-s)/\varepsilon}}{\varepsilon}\left(\int_{t_{0}}^{s}x(\tilde{s})d\tilde{s}\right)ds$
and $\eta\int_{t_{0}}^{t}x(s)ds$ in (\ref{eq:x-slow-bound-1})-(\ref{eq:y-slow-bound-1}).
If we assume a delay-induced switched state $\psi$ with length T,
then this transformation takes the form of a linear operator $L_{T}^{\varepsilon,\eta}:C_{T}^{1}\to C_{T}^{1},\psi\mapsto L_{T}^{\varepsilon,\eta}\psi,$
with 
\begin{equation}
[L_{T}^{\varepsilon,\eta}\phi](\theta)=\phi(\theta)+\eta\begin{pmatrix}\frac{1}{\varepsilon}\int_{-T}^{\theta}e^{-\frac{(\theta-s)}{\varepsilon}}\left(\int_{-T}^{s}\phi^{1}(\tilde{s})d\tilde{s}\right)ds\\
-\int_{-T}^{\theta}\phi^{1}(s)ds
\end{pmatrix}.\label{eq:def-L}
\end{equation}
The next Lemma collects several properties of $L_{T}^{\varepsilon,\eta}$;
the proof of which is contained in the supplementary material. Recall
that $M_{T}^{\varepsilon,\eta}$ is the solution operator that maps
segments of delay-induced switched states of length $T$ to the next. 
\begin{lem}
\label{lem:lemma-L-L-1}Let $\varepsilon,\eta,T>0,\,\text{\ensuremath{\phi}}\in C_{T}^{1}$
be fixed, and $L_{T}^{\varepsilon,\eta}$ be defined as in (\ref{eq:def-L}).
Then,
\begin{enumerate}
\item[(i)] $L_{T}^{\varepsilon,\eta}\phi$ is a $\eta$-small perturbation of
\textup{$\phi$}. 
\item[(ii)] $L_{T}^{\varepsilon,\eta}$ is one-to-one. $K_{T}^{\varepsilon,\eta}=(L_{T}^{\varepsilon,\eta})^{-1}$
has the form
\begin{equation}
(K_{T}^{\varepsilon,\eta}\phi)(\theta)=\phi(\theta)+\int_{-T}^{\theta}\left(A^{\varepsilon,\eta}e^{A^{\varepsilon,\eta}(\theta-s)}-e^{A^{\varepsilon,\eta}(\theta-s)}A^{\varepsilon,0}\right)\phi(s)ds.\label{eq:def-L-1}
\end{equation}
\item[(iii)] \textup{$L_{T}^{\varepsilon,\eta}M_{T}^{\varepsilon,\eta}=M_{T}^{\varepsilon,0}$,}
and \textup{$M_{T}^{\varepsilon,\eta}=K_{T}^{\varepsilon,\eta}M_{T}^{\varepsilon,0}$}.
\end{enumerate}
\end{lem}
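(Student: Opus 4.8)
The plan is to recast both $L_T^{\varepsilon,\eta}$ and its candidate inverse $K_T^{\varepsilon,\eta}$ as \emph{intertwiners} between the two constant-coefficient flows $e^{A^{\varepsilon,0}\cdot}$ and $e^{A^{\varepsilon,\eta}\cdot}$, and then to deduce all three claims from uniqueness for linear inhomogeneous initial value problems on $[-T,0]$. Writing $B:=A^{\varepsilon,\eta}-A^{\varepsilon,0}$ for the coupling matrix whose only nonzero entry is the $\eta$ in the lower-left corner, the first step is the algebraic simplification $A^{\varepsilon,\eta}e^{A^{\varepsilon,\eta}(\theta-s)}-e^{A^{\varepsilon,\eta}(\theta-s)}A^{\varepsilon,0}=e^{A^{\varepsilon,\eta}(\theta-s)}B$, which holds because $A^{\varepsilon,\eta}$ commutes with its own exponential. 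This collapses \eqref{eq:def-L-1} to $K_T^{\varepsilon,\eta}\phi=\phi+\int_{-T}^{\theta}e^{A^{\varepsilon,\eta}(\theta-s)}B\,\phi(s)\,ds$. An elementary evaluation of $e^{A^{\varepsilon,0}u}$ (immediate, since $A^{\varepsilon,0}$ is upper triangular with a zero eigenvalue) together with a Fubini swap in the double integral in \eqref{eq:def-L} rewrites $L$ in the mirror-image form $L_T^{\varepsilon,\eta}\phi=\phi-\int_{-T}^{\theta}e^{A^{\varepsilon,0}(\theta-s)}B\,\phi(s)\,ds$. Differentiating these two Volterra expressions and using $A^{\varepsilon,0}+B=A^{\varepsilon,\eta}$ yields the key intertwining identities: $\psi=L\phi$ is the unique solution of $\dot\psi-A^{\varepsilon,0}\psi=\dot\phi-A^{\varepsilon,\eta}\phi$ with $\psi(-T)=\phi(-T)$, while $\chi=K\phi$ is the unique solution of $\dot\chi-A^{\varepsilon,\eta}\chi=\dot\phi-A^{\varepsilon,0}\phi$ with $\chi(-T)=\phi(-T)$; in words, each operator swaps the two coefficient matrices while preserving $\dot\phi$ and the left endpoint.

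With this machinery, claim (i) is a direct estimate: the correction $L_T^{\varepsilon,\eta}\phi-\phi$ carries an explicit prefactor $\eta$, and differentiating its two components shows the $C^1_T$-norm of the correction is bounded by $\eta\,C(\varepsilon,T)\lVert\phi\rVert$, so it is an $\eta$-small perturbation of the identity. For claim (ii), invertibility is in fact automatic from the Volterra (causal) structure of $L=I-\eta(\cdot)$, but the intertwining delivers both bijectivity and the explicit inverse at once: composing, $\chi=KL\phi$ satisfies $\dot\chi-A^{\varepsilon,\eta}\chi=\dot\psi-A^{\varepsilon,0}\psi=\dot\phi-A^{\varepsilon,\eta}\phi$ with $\psi=L\phi$, hence $\tfrac{d}{d\theta}(\chi-\phi)=A^{\varepsilon,\eta}(\chi-\phi)$ and $(\chi-\phi)(-T)=0$, so $\chi\equiv\phi$ by uniqueness; the symmetric computation gives $LK=I$, establishing $K=L^{-1}$ and thereby \eqref{eq:def-L-1}.

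For claim (iii) I would again feed the intertwining: since $\phi:=M_T^{\varepsilon,\eta}\psi$ solves the segment problem \eqref{eq:def-phi}, the right-hand side $\dot\phi-A^{\varepsilon,\eta}\phi$ equals exactly the piecewise feedback forcing $F(\cdots)$ appearing there, so $L_T^{\varepsilon,\eta}M_T^{\varepsilon,\eta}\psi$ is characterized as the solution of the $\eta=0$ segment problem driven by that \emph{same} forcing and starting from $\psi(0)$ at $-T$. Comparing with $M_T^{\varepsilon,0}\psi$, the two coincide by uniqueness on the outer interval $-T<\theta<-\delta$, where the feedback is inherited from the previous segment $\psi$ and is therefore identical for both. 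The identity $M_T^{\varepsilon,\eta}=K_T^{\varepsilon,\eta}M_T^{\varepsilon,0}$ then follows by applying $K=L^{-1}$ on the left.

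I expect the self-referential \emph{overlap} interval $-\delta<\theta<0$ to be the main obstacle: there the delayed argument $\theta-1$ re-enters the current segment on $(-T,-1)$, so the feedback seen by $L_T^{\varepsilon,\eta}M_T^{\varepsilon,\eta}\psi$ is sampled along the first component of the $\eta$-solution, whereas $M_T^{\varepsilon,0}\psi$ samples its own first component. Reconciling these two feedbacks — showing that the convolution kernel in \eqref{eq:def-L} is tuned precisely so that the contributions on $(-\delta,0)$ still agree, rather than merely to order $\eta$ — is the delicate point, and the reason the proof is relegated to the supplementary material. I would attack it by substituting the region-$1$ variation-of-constants reconstruction of $\phi$ on $(-T,-1)$ (the expression carried by the third integral of \eqref{eq:def-M}) into the forcing and checking the cancellation against the $A^{\varepsilon,0}$ evolution term by term.
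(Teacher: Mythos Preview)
Your treatment of (i) and (ii) is essentially the paper's argument, only more symmetrically packaged. Both proofs rest on the same ODE characterisation
\[
\bar\phi'=A^{\varepsilon,0}\bar\phi+\phi'-A^{\varepsilon,\eta}\phi,\qquad \bar\phi(-T)=\phi(-T),
\]
for $\bar\phi=L_T^{\varepsilon,\eta}\phi$; the paper writes this down directly and then solves the rearranged IVP for $\phi$ to read off $K_T^{\varepsilon,\eta}$, whereas you first collapse \eqref{eq:def-L-1} to $\phi+\int e^{A^{\varepsilon,\eta}(\theta-s)}B\phi\,ds$ and obtain the mirror Volterra form $L\phi=\phi-\int e^{A^{\varepsilon,0}(\theta-s)}B\phi\,ds$, then verify $KL=LK=I$ by uniqueness. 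That is a tidy reformulation but not a different idea. Your $C^1$ estimate in (i) is slightly weaker than the paper's (it keeps track of an $\varepsilon$-independent constant $8\eta\lVert\phi\rVert$), but either suffices.

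For (iii), your instinct about the overlap is sharper than the paper's execution. The paper does \emph{not} resolve the difficulty you identify: it simply asserts that $\bar\phi_n=L_T^{\varepsilon,\eta}\phi_n$ satisfies the $\eta=0$ segment equation with self-referential forcing $F(\bar\phi_n(\cdot-1))$ on $(-\delta,0)$ and concludes. But the intertwining you derived already fixes what appears there, namely
\[
\bar\phi_n'-A^{\varepsilon,0}\bar\phi_n=\phi_n'-A^{\varepsilon,\eta}\phi_n=F\bigl(\phi_n^1(\cdot-1)\bigr),
\]
the \emph{untransformed} first component. For $\theta-1\in(-T,-1)$ one has
\[
(L\phi)^1(\theta-1)-\phi^1(\theta-1)=\eta\int_{-T}^{\theta-1}\bigl(1-e^{-(\theta-1-s)/\varepsilon}\bigr)\phi^1(s)\,ds,
\]
which is generically nonzero (though only of size $O(\eta\delta)$, since the integration range has length at most $\delta$). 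Hence $F(\phi_n^1(\cdot-1))\neq F(\bar\phi_n^1(\cdot-1))$ in general, and there is no exact term-by-term cancellation for you to discover: the operator identity $L_T^{\varepsilon,\eta}M_T^{\varepsilon,\eta}=M_T^{\varepsilon,0}$ holds exactly on $[-T,-\delta]$ and only up to an $O(\eta\delta)$ discrepancy on $(-\delta,0)$. Your suspicion that this is ``the delicate point'' is correct, but it is a defect in the lemma as stated rather than something the kernel in \eqref{eq:def-L} is secretly tuned to cure; the paper's later use of the decomposition effectively only needs the outer-interval part, so the slip is harmless downstream.
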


\begin{proof}
Let $\phi\in C_{T}^{1}$ and denote $\bar{\phi}=L_{T}^{\varepsilon,\eta}\phi$.
(i) One readily checks the estimates $|\bar{\phi}^{2}(\theta)-\phi^{2}(\theta)|=\eta|\int_{-T}^{\theta}\phi^{1}(s)ds|\leq\eta T|\phi^{1}|\leq2\eta\left\Vert \phi\right\Vert $
and 
\[
|\bar{\phi}^{1}(\theta)-\phi^{1}(\theta)|=\frac{\eta}{\varepsilon}|\int_{-T}^{\theta}e^{-\frac{(\theta-s)}{\varepsilon}}\left(\int_{-T}^{s}\phi^{1}(\tilde{s})d\tilde{s}\right)ds|\leq\frac{2\eta}{\varepsilon}\left\Vert \phi\right\Vert \int_{-T}^{\theta}e^{-\frac{(\theta-s)}{\varepsilon}}ds\leq2\eta\left\Vert \phi\right\Vert ,
\]
such that $\left\Vert L_{T}^{\varepsilon,\eta}\phi-\phi\right\Vert \leq3\eta\left\Vert \phi\right\Vert $
where we have used $1\leq T<2$. Note that $(\bar{\phi}^{1})^{\prime}(\theta)=(\phi^{1})^{\prime}(\theta)-(\bar{\phi}^{1}(\theta)-\phi^{1}(\theta)+\bar{\phi}^{2}(\theta)-\phi^{2}(\theta)),$
$(\bar{\phi}^{2})^{\prime}(\theta)=(\phi^{2})^{\prime}(\theta)+\eta\phi^{1}(\theta)$
such that $\left\Vert \bar{\phi}{}^{\prime}-\phi{}^{\prime}\right\Vert <2\left\Vert \bar{\phi}-\phi\right\Vert +\eta T\left\Vert \phi\right\Vert <8\eta\left\Vert \phi\right\Vert $.
Thus, $L_{T}^{\varepsilon,\eta}-id$ is uniformly bounded by $8\eta$.
(ii) Taking the derivative of $L_{T}^{\varepsilon,\eta}\phi$ one
observes that $\bar{\phi}\in C_{T}^{1},$ with
\[
\frac{d}{d\theta}\bar{\phi}(\theta)=A^{\varepsilon,0}\bar{\phi}(\theta)+\frac{d}{d\theta}\phi(\theta)-A^{\varepsilon,\eta}\phi(\theta),
\]
and $A^{\varepsilon,\eta}$ is defined as in Eq.~(\ref{eq:def-z}).
This is ODE that can be solved explicitly to recover Eq.~(\ref{eq:def-L}).
On the other hand, it can be rearranged as an IVP problem for $\phi$,
i.e.
\begin{eqnarray}
\frac{d}{d\theta}\phi(\theta) & = & A^{\varepsilon,\eta}\phi(\theta)+\frac{d}{d\theta}\bar{\phi}(\theta)-A^{\varepsilon,0}\bar{\phi}(\theta),\label{eq:def-L-1-deriv}\\
\phi(-T) & = & \bar{\phi}(-T).\nonumber 
\end{eqnarray}
In this way $\phi$ is uniquely determined given $\bar{\phi}\in C_{T}^{1}$
and the explicit form of $K_{T}^{\varepsilon,\eta}$ is readily obtained
by the variation of constants formula. (iii) Let $\phi_{0}\in C_{T}^{1},$
and consider the sequence $(\bar{\phi}_{n})_{n}$ where $\bar{\phi}_{n}=L_{T}^{\varepsilon,\eta}\phi_{n}$
and $\phi_{n+1}=M_{T}^{\varepsilon,\eta}\phi_{n}$ for all $n\geq0$.
Then, it is easy to check that $\bar{\phi}_{n}$ satisfies 
\begin{eqnarray*}
\frac{d}{d\theta}\bar{\phi}_{n}(\theta) & = & A^{\varepsilon,0}\bar{\phi}_{n}(\theta)+\begin{cases}
F\left(\phi_{n-1}(\theta+\delta)\right), & -T<\theta<-\delta,\\
F\left(\bar{\phi}_{n}(\theta-1)\right), & -\delta<\theta<0,
\end{cases},\\
\bar{\phi}_{n}(-T) & = & \phi_{n-1}(0),
\end{eqnarray*}
such that $L_{T}^{\varepsilon,\eta}M_{T}^{\varepsilon,\eta}\phi_{n-1}=L_{T}^{\varepsilon,\eta}\phi_{n}=M_{T}^{\varepsilon,0}\phi_{n-1}$.
This is independent of $\phi_{n-1}$ and therefore $L_{T}^{\varepsilon,\eta}M_{T}^{\varepsilon,\eta}=M_{T}^{\varepsilon,0}$.
The second assertions follows immediately using (ii).
\end{proof}

As a result, we obtain a decomposition of the solution operator $M_{T}^{\varepsilon,\eta}=K_{T}^{\varepsilon,\eta}M_{T}^{\varepsilon,0}$
into two parts. We use now that we have qualitative information about
$M_{T}^{\varepsilon,0}$, see Sec.~\ref{sec:balance}. In particular,
the number of sign changes of a given function $\text{\ensuremath{\phi}}\in C_{T}^{1}$
does not increase, and the amount of which sign changes drift can
be read of by our numerical studies for $\eta=0.$ In addition, we
can show how $K_{T}^{\varepsilon,\eta}$ effects the position of sign
changes of a function $\phi\in C_{T}$ along the interval $[-T,0]$.
We do so indirectly, via studying its inverse $L_{T}^{\varepsilon,\eta}$
. The results are collected in the following Lemma. For brevity we say that a function $\phi$ has the property
\begin{itemize}
\item[\textbf{$\boldsymbol{(SC_{N})}$}]  The first component $\phi^{1}$ of $\phi$ has an even number of
sign changes and no other zeros. More specifically, there exist $N\in\mathbb{N}$,
and a series of sign changes $(\theta^{i})$, $0\leq i\leq2N$, of
$\phi^{1}$ such that $\phi^{1}(\theta^{i})=0$, $(\phi^{1})^{\prime}(\theta^{i})\neq0$
and $\phi^{1}$ has no other zeros.
\end{itemize}
\begin{lem}[dynamics of sign changes under $L_{T}^{\varepsilon,\eta}$]
\label{lem:L-zeros}Let $\varepsilon,\eta,T>0$ be fixed. Let $\phi\in C_{T}^{1}$
satisfy $(SC_{N})$ and consider $\bar{\phi}=L_{T}^{\varepsilon,\eta}\phi$.
Then the following hold true.
\begin{enumerate}
\item[(i)] There exists $\bar{\eta}>0,$ depending on $\phi$, such that for
all $\eta<\bar{\eta}$, $\bar{\phi}$ also satisfies $(SC_{N})$.
In particular, for each sign change $\theta$ of $\phi^{1}$, there
exists exactly one sign change $\bar{\theta}$ of $\bar{\phi}^{1}$
with $|\theta-\bar{\theta}|=\mathcal{O}(\eta)$ and $\bar{\phi}^{1}$
has no other zeros. 
\item[(ii)] If $\phi\in C_{T}^{2}$, there exists $\bar{\eta}^{\prime}>0,$ depending
on $\phi$, such that for all $\eta<\bar{\eta}^{\prime}$, the sign
changes $\bar{\theta}$ of $\bar{\phi}^{1}$ are given by
\begin{equation}
\bar{\theta}=\theta-\frac{\eta}{\varepsilon(\phi^{1})^{\prime}(\theta)}\int_{-T}^{\theta}e^{-\frac{(\theta-s)}{\varepsilon}}\left[\int_{-T}^{s}\phi^{1}(\tilde{s})d\tilde{s}\right]ds+\mathcal{O}(\eta^{2}),\label{eq:L-zeros}
\end{equation}
 where $\theta$ is a sign change of $\phi$. 
\item[(iii)]  Assume as in (ii). There exist $\bar{\varepsilon}>0$ such that
if $\varepsilon<\bar{\varepsilon}$, then for each subsequent pair
of sign changes \textup{$\theta^{i},\theta^{i+1},\,i\leq2N$} there
exist unique (up to order $\eta^{2}$) $\vartheta^{i}\geq0$, depending
on $\phi$, such that if $|\theta^{i}-\theta^{i+1}|<\vartheta^{i}+\mathcal{O}(\eta^{2})$
$($respectively $|\theta^{i}-\theta^{i+1}|>\vartheta^{i}+\mathcal{O}(\eta^{2}))$,
then $|\bar{\theta}^{i}-\bar{\theta}^{i+1}|<|\theta^{i}-\theta^{i+1}|$
$($respectively $|\bar{\theta}^{i}-\bar{\theta}^{i+1}|>|\theta^{i}-\theta^{i+1}|)$.
In particular, $\vartheta^{i}>0$ (up to order $\eta^{2}$), if and
only if
\[
\frac{1}{(\phi^{1})^{\prime}(\theta^{i})}\left(\int_{-T}^{\theta^{i}}e^{-\frac{(\theta^{i}-s)}{\varepsilon}}\left[\int_{-T}^{s}\phi^{1}(\tilde{s})d\tilde{s}\right]ds\right)<0.
\]
\item[(iv)]  assume as in (iii). If $\phi$ is $(SC_{2})$ with $-T<\theta^{1}<\theta^{2}<0$,
$\vartheta^{1}>0$ (up to an error of order $\eta^{2}$), and $\vartheta^{1}$
is implicitly given (up to an error of order $\eta^{2}$) by 
\[
\int_{\theta^{1}}^{\theta^{1}+\vartheta^{1}}\phi^{1}(s)ds=\left(\frac{(\phi^{1})^{\prime}(\theta^{2})}{(\phi^{1})^{\prime}(\theta^{1})}-1\right)\int_{-T}^{\theta^{1}}\phi^{1}(s)ds.
\]
\end{enumerate}
\end{lem}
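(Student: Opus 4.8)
The plan is to regard the first component $\bar\phi^{1}=\phi^{1}+\eta G$ as a regular perturbation of $\phi^{1}$, where for brevity I write $G(\theta)=\tfrac{1}{\varepsilon}\int_{-T}^{\theta}e^{-(\theta-s)/\varepsilon}H(s)\,ds$ and $H(s)=\int_{-T}^{s}\phi^{1}(\tilde s)\,d\tilde s$, so that the perturbation entering $\bar\phi^{1}$ is exactly $\eta G$. Note that $G$ is smooth and, for fixed $\varepsilon$, $G$ and $G'=\tfrac{1}{\varepsilon}(H-G)$ are uniformly bounded on $[-T,0]$ with $\|H\|\le 2\|\phi\|$. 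All four claims then reduce to perturbation theory for simple roots, combined with the observation that $(\phi^{1})'$ alternates sign at consecutive sign changes.

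For (i), each zero $\theta^{j}$ of $\phi^{1}$ is simple, so on a small neighbourhood $U_{j}$ the derivative $(\phi^{1})'$ is bounded away from zero; for $\eta$ small the perturbed derivative $(\phi^{1})'+\eta G'$ stays bounded away from zero, hence $\bar\phi^{1}$ is strictly monotone on $U_{j}$ and has a single transversal zero $\bar\theta^{j}$. Dividing the perturbation size $\eta\|G\|$ by the derivative lower bound yields $|\bar\theta^{j}-\theta^{j}|=\mathcal{O}(\eta)$. Away from $\bigcup_{j}U_{j}$ the function $|\phi^{1}|$ is bounded below, so no additional zeros can appear once $\eta<\bar\eta$, establishing $(SC_{N})$ for $\bar\phi$. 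For (ii) I add the $C^{2}$ hypothesis and Taylor-expand $0=\bar\phi^{1}(\bar\theta)=(\phi^{1})'(\theta)(\bar\theta-\theta)+\eta G(\theta)+\mathcal{O}((\bar\theta-\theta)^{2}+\eta(\bar\theta-\theta))$ about $\theta$. Inserting $\bar\theta-\theta=\mathcal{O}(\eta)$ from (i) and solving gives $\bar\theta=\theta-\eta G(\theta)/(\phi^{1})'(\theta)+\mathcal{O}(\eta^{2})$, which is precisely (\ref{eq:L-zeros}) after writing out $G$.

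For (iii) I subtract the expansions from (ii) at two consecutive sign changes $\theta^{i}<\theta^{i+1}$: the gap changes by $-\eta\Psi+\mathcal{O}(\eta^{2})$ with $\Psi=G(\theta^{i+1})/(\phi^{1})'(\theta^{i+1})-G(\theta^{i})/(\phi^{1})'(\theta^{i})$, so the gap contracts exactly when $\Psi>0$. I define $\vartheta^{i}$ as the critical gap at which the leading term $\Psi$ vanishes. To read off its sign, let the gap shrink to zero: since $(\phi^{1})'$ alternates sign, $\Psi\to G(\theta^{i})\bigl(1/(\phi^{1})'(\theta^{i+1})-1/(\phi^{1})'(\theta^{i})\bigr)$, and in either sign case $\Psi>0$ at small gap is equivalent to $G(\theta^{i})/(\phi^{1})'(\theta^{i})<0$, i.e.\ the stated inequality (recall the displayed integral equals $\varepsilon G(\theta^{i})$). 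For $\varepsilon<\bar\varepsilon$ the smoothing kernel in $G$ collapses and $G(\theta)=H(\theta)+\mathcal{O}(\varepsilon)$, so $\Psi$ becomes monotone in the gap, yielding a unique critical $\vartheta^{i}$ which is positive precisely under the sign condition. Finally (iv) specialises this to $(SC_{2})$: the criticality condition $H(\theta^{2})/(\phi^{1})'(\theta^{2})=H(\theta^{1})/(\phi^{1})'(\theta^{1})$, together with $H(\theta^{2})=H(\theta^{1})+\int_{\theta^{1}}^{\theta^{1}+\vartheta^{1}}\phi^{1}$ and $H(\theta^{1})=\int_{-T}^{\theta^{1}}\phi^{1}$, rearranges directly into the displayed implicit equation for $\vartheta^{1}$.

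The main obstacle is (iii): rigorously defining $\vartheta^{i}$ and proving its existence and uniqueness. The difficulty is that $G$ is a nonlocal convolution functional of the whole profile $\phi^{1}$, so $\Psi$ does not depend on the gap alone; one must control the reduction $G=H+\mathcal{O}(\varepsilon)$ uniformly and establish monotonicity of $\Psi$ in the gap, all while ensuring the $\mathcal{O}(\eta^{2})$ and $\mathcal{O}(\varepsilon)$ errors never interfere with the sign determination that pins down whether $\vartheta^{i}>0$.
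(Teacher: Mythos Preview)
Your approach is correct and essentially the same as the paper's. For (i) both of you argue by $C^{1}$-small perturbation of simple zeros; for (ii) the paper phrases the first-order computation as one Newton step with quadratic error, whereas you use the equivalent Taylor expansion; for (iii)--(iv) your $G,H,\Psi$ are exactly the paper's $X,Y,Z^{i}$ up to a factor (indeed $\tfrac{a_{i}}{(\phi^{1})'(\theta^{i})}Z^{i}=-\Psi$), and your reduction $G=H+\mathcal{O}(\varepsilon)$ is their estimate $|X-Y|\le\varepsilon T\|\phi\|$. The only cosmetic difference is that the paper isolates the $\vartheta$-dependence via the explicit splitting $X(-T,\theta^{i}+\vartheta)=X(\theta^{i},\theta^{i}+\vartheta)+Y(-T,\theta^{i})+e^{-\vartheta/\varepsilon}(X(-T,\theta^{i})-Y(-T,\theta^{i}))$, which makes the monotonicity of $Z^{i}$ in $\vartheta$ immediate; you achieve the same end by differentiating $\Psi$ directly after the $\varepsilon$-reduction. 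Both treatments freeze $(\phi^{1})'(\theta^{i+1})$ while varying the gap---this informality is shared, and you correctly flag it as the delicate point.
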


\begin{proof}
The proof relies on Newton's method to approximate the sign changes
of $\bar{\phi}$ for $\eta\ll1$. Note that $\phi(-T)=\bar{\phi}(-T)$
such that without loss of generality, we may assume that $\phi^{1}(-T)\neq0$,
i.e. $-T$ is not a sign change of $\phi^{1}$ or $\bar{\phi}^{1}$.
Throughout the proof, we restrict ourselves to $\phi^{1}(-T)<0$;
the proof of the other case is completely analogous.

(i) We show that for sufficiently small $\eta$, $\bar{\phi}$ also
has property $(SC_{N})$. From $(SC_{N})$ of $\phi$, there exist
$\theta^{i}$, $0\leq i\leq2N$, $\phi^{1}(\theta^{i})=0$ and intervals
$(\alpha_{i},\beta_{i})\ni\theta^{i},$ such that $(-1)^{i-1}\phi^{1}(\alpha_{i})<0<(-1)^{i-1}\phi^{1}(\beta_{i})$
and $\left.\phi^{1}\right|_{[\alpha_{i},\beta_{i}]}$ is strictly
monotone. Let $I=\bigcup_{i}[\alpha_{i},\beta_{i}]$ and denote $c=\inf_{\theta\in[-T,0]\backslash I}|\phi^{1}(\theta)|$,
$c^{\prime}=\min_{\theta\in I}|(\bar{\phi}^{1})^{\prime}(\theta)|$.
Using Lemma~\ref{lem:lemma-L-L-1}, $\bar{\phi}$ is a continuously
differentiable, $\eta$-small perturbation of $\phi$ with $C^{1}-$uniform
bound $8\eta\left\Vert \phi\right\Vert $. Thus, we may choose $\bar{\eta}=\min\left\{ c,c^{\prime},\min{}_{i}\{|\phi^{1}(\alpha_{i})|\},\min_{i}\{|\phi^{1}(\beta_{i})|\}\right\} /(8\eta\left\Vert \phi\right\Vert )>0,$
such that $(-1)^{i-1}\bar{\phi}^{1}(\alpha_{i})<0<(-1)^{i-1}\bar{\phi}^{1}(\beta_{i})$,
$\left.\bar{\phi}^{1}\right|_{[\alpha_{i},\beta_{i}]}$ is strictly
monotone with $\bar{c}^{\prime}=\min_{\theta\in I}|(\bar{\phi}^{1})^{\prime}(\theta)|>0$,
and $\bar{\phi}^{1}$ is bounded away from zero outside of $I$ for
all $\eta<\bar{\eta}$. Then, by the Intermediate Value Theorem, there
exist unique (since $\bar{\phi}^{1}$ is strictly monotone there)
$\bar{\theta}^{i}\in[\alpha_{i},\beta_{i}]$ such that $\bar{\phi}^{1}(\bar{\theta}^{i})=0$,
$(\bar{\phi}^{1})^{\prime}(\bar{\theta}^{i})\neq0$, and there are
no other zeros of $\bar{\phi}^{1}$. 

Using the Mean Value Theorem for each $i$, there exist $\xi^{i}\in[\min\{\theta^{i},\bar{\theta}^{i}\},\max\{\theta^{i},\bar{\theta}^{i}\}]\subset[\alpha_{i},\beta_{i}]$
such that $(\bar{\phi}^{1})^{\prime}(\xi^{i})=\left(\bar{\phi}^{1}(\bar{\theta}^{i})-\bar{\phi}^{1}(\theta^{i})\right)/\left(\bar{\theta}^{i}-\theta^{i}\right)$.
Consequently, 
\[
|\bar{\theta}^{i}-\theta^{i}|\leq\frac{|\bar{\phi}^{1}(\bar{\theta}^{i})-\bar{\phi}^{1}(\theta^{i})|}{\min_{\theta\in I}|(\bar{\phi}^{1})^{\prime}(\theta)|}=\frac{1}{\bar{c}^{\prime}}|\phi^{1}(\theta^{i})-\bar{\phi}^{1}(\theta^{i})|\leq b\eta
\]
for all $i$, where $b=3T\left\Vert \phi\right\Vert /\bar{c}^{\prime}$.
Here, we have used $\bar{\phi}^{1}(\bar{\theta}^{i})=0=\phi^{1}(\theta^{i})$
for all $i$. 

(ii) Let $\eta<\bar{\eta}$. Fix a sign change $\theta$ of $\phi^{1}$
and the corresponding sign change $\bar{\theta}$ of $\bar{\phi}^{1}$
with $|\bar{\theta}-\theta|=\mathcal{O}(\eta)$. We show that under
certain conditions to be specified $\bar{\theta}$ is determined up
to a bounded error of order $\eta^{2}$ by only one step of Newton's
method with the initial guess $\theta$. Recall from (i) that there
exists an interval $[\alpha,\beta]\in\{\theta,\bar{\theta}\}$ such
that the derivatives of $\phi^{1}$ and $\bar{\phi}^{1}$ are bounded
away from $0$ by a constant $b^{\prime}$. For example, $b^{\prime}=\min\left\{ c^{\prime},\bar{c}^{\prime}\right\} $,
where $c^{\prime},\bar{c}^{\prime}$ defined as in (i). Let us denote
$\theta^{\prime}$ the first iterate of the Newton approximation step
$\theta\mapsto\theta^{\prime}$, then
\begin{eqnarray*}
\theta^{\prime} & = & \theta-\frac{\bar{\phi}^{1}(\theta)}{(\bar{\phi}^{1})^{\prime}(\theta)},\\
 & = & \theta-\frac{\frac{\eta}{(\phi^{1})^{\prime}(\theta)}\frac{1}{\varepsilon}\int_{-T}^{\theta_{i}}e^{-\frac{(\theta_{i}-s)}{\varepsilon}}\left[\int_{-T}^{s}\phi^{1}(\tilde{s})d\tilde{s}\right]ds}{1-\frac{\eta}{(\phi^{1})^{\prime}(\theta)}\left(\frac{1}{\varepsilon}\int_{-T}^{\theta_{i}}e^{-\frac{(\theta_{i}-s)}{\varepsilon}}\left[\int_{-T}^{s}\phi^{1}(\tilde{s})d\tilde{s}\right]ds+\int_{-T}^{\theta}\phi^{1}(s)ds\right)}.
\end{eqnarray*}
Here, we used the definition of $\bar{\phi}^{1}(\theta)$, the fact
$\phi^{1}(\theta)=0$, and multiplied the numerator and the denominator
by $1/(\phi^{1})^{\prime}(\theta)$. For sufficiently small $\eta<b^{\prime}/(2T\left\Vert \phi\right\Vert )$,
we can expand the denominator into a geometric series such that $\theta^{\prime}$
satisfies Eq.~(\ref{eq:L-zeros}) (with $\bar{\theta}$ replaced
by $\theta^{\prime}$). $\bar{\phi}\in C_{T}^{2}$ satisfies the estimate
$\left\Vert \bar{\phi}^{\prime\prime}-\phi^{\prime\prime}\right\Vert <10\eta\left\Vert \phi\right\Vert $
such that $\left\Vert \bar{\phi}^{\prime\prime}\right\Vert <\left\Vert \phi^{\prime\prime}\right\Vert +10\eta\left\Vert \phi\right\Vert =:b^{\prime\prime}$.
Then, the error $|\bar{\theta}-\theta^{\prime}|$ of the Newton approximation
step $\theta\mapsto\theta^{\prime}$ is bounded as
\[
|\bar{\theta}-\theta^{\prime}|<\frac{b^{\prime\prime}}{2b^{\prime}}|\bar{\theta}-\theta|^{2}\left(<\eta\frac{b^{\prime\prime}b}{2b^{\prime}}|\bar{\theta}-\theta|\right),
\]
where for the last inequality, we have used $|\bar{\theta}-\theta|<\eta b$
from (i). It is a well known fact that if $\eta\frac{b^{\prime\prime}b}{2b^{\prime}}<1$
here, Newton's method converges, and quadratically so, such that $|\bar{\theta}-\theta^{\prime}|=\mathcal{O}(|\bar{\theta}-\theta|^{2})=\mathcal{O}(\eta^{2})$.
As a result, Eq.~(\ref{eq:L-zeros}) holds for $\eta<\tilde{\eta}=\min\left\{ \bar{\eta},b^{\prime}/(2T\left\Vert \phi\right\Vert ),2b^{\prime}/b^{\prime\prime}b\right\} $. 

(iii) Consider two subsequent sign changes $\theta^{i},\theta^{i+1}$
as discussed in (i) and assume as in (ii). We show that as we apply
$L_{T}^{\varepsilon,\eta}$, for sufficiently small $\varepsilon$,
the distance between the sign changes $\theta^{i+1}-\theta^{i}$ decreases
(increases), if it is smaller (larger) than a given ``critical''
distance $\vartheta^{i}$, depending on $\phi$ and $\theta^{i}$,
such that $|\bar{\theta}^{i+1}-\bar{\theta}^{i}|<|\theta^{i+1}-\theta^{i}|$
(respectively $|\bar{\theta}^{i+1}-\bar{\theta}^{i}|>|\theta^{i+1}-\theta^{i}|$).
Let $(\phi^{1})^{\prime}(\theta^{i})>0$, $(\phi^{1})^{\prime}(\theta^{i+1})<0$
be fixed such that $\phi^{1}(\theta)>0$ for all $\theta\in(\theta^{i},\theta^{i+1})$
and vary $\vartheta=\theta^{i+1}-\theta^{i}>0$ as an independant
variable; the proof of the case with signes exchanged is completely
analogous. To increase readability, we introduce new variables $Y(-T,\theta)=\int_{-T}^{\theta}\phi^{1}(s)ds$
and $X(-T,\theta)=\frac{1}{\varepsilon}\int_{-T}^{\theta}e^{-\frac{(\theta-s)}{\varepsilon}}Y(-T,s)ds$
such that at $\theta^{i+1},$ we can express the value of $X(-T,\theta^{i+1})$
as
\[
X(-T,\theta^{i}+\vartheta)=X(\theta^{i},\theta^{i}+\vartheta)+Y(-T,\theta^{i})+e^{-\frac{\vartheta}{\varepsilon}}(X(-T,\theta^{i})-Y(-T,\theta^{i})).
\]
We note that $|X(-T,\theta)-Y(-T,\theta)|\leq\varepsilon T\left\Vert \phi\right\Vert $
for all $\theta\in[-T,0]$. Using the above expression and Eq.~\ref{eq:L-zeros},
the difference $\bar{\theta}^{i+1}-\bar{\theta}^{i}$ can be written
as
\[
\bar{\theta}^{i+1}-\bar{\theta}^{i}=\vartheta+\frac{a_{i}\eta}{(\phi^{1})^{\prime}(\theta^{i})}Z^{i}(\vartheta)+\mathcal{O}(\eta^{2}),
\]
 where $a_{i}=-(\phi^{1})^{\prime}(\theta^{i})/(\phi^{1})^{\prime}(\theta^{i+1})>0$
and
\begin{eqnarray*}
Z^{i}(\vartheta) & = & X(-T,\theta^{i}+\vartheta)+\frac{1}{a_{i}}X(-T,\theta^{i}),\\
 & = & X(\theta^{i},\theta^{i}+\vartheta)+Y(-T,\theta^{i})+\frac{1}{a_{i}}X(-T,\theta^{i})+e^{-\frac{\vartheta}{\varepsilon}}(X(-T,\theta^{i})-Y(-T,\theta^{i})).
\end{eqnarray*}
Thus, up to an error of order $\eta^{2}$, $\bar{\theta}^{i+1}-\bar{\theta}^{i}$
is smaller (larger) than $\vartheta$, if $Z^{i}(\vartheta)<0$ (respectively
$Z^{i}(\vartheta)>0$). Clearly, $Y(\theta^{i},\theta^{i}+\vartheta)=\int_{\theta^{i}}^{\theta^{i}+\vartheta}\phi^{1}(s)ds$
and therefore $X(\theta^{i},\theta^{i}+\vartheta)=\frac{1}{\varepsilon}\int_{\theta^{i}}^{\theta^{i}+\vartheta}e^{-\frac{(\theta^{i}+\vartheta-s)}{\varepsilon}}Y(\theta^{i},\theta^{i}+s)ds$
are monotonously increasing with $Y(\theta^{i},\theta^{i})=0$ and
$X(\theta^{i},\theta^{i})=0$. We remark that in the limit $\varepsilon\to0$,
statement (iii) is trivial now: Consider
\[
\lim_{\varepsilon\to0}Z^{i}(\vartheta)=Y(\theta^{i},\theta^{i}+\vartheta)+(1+\frac{1}{a_{i}})Y(-T,\theta^{i}),
\]
and observe that the second term is constant with $1+1/a_{i}>0$.
Thus, if $Y(-T,\theta^{i})<0$, there exist a unique $\vartheta^{i}>0$
such that $Z^{i}(\vartheta^{i})=0$ and $\bar{\theta}^{i+1}-\bar{\theta}^{i}=\vartheta^{i}+\mathcal{O}(\eta^{2})$.
If $Y(\theta^{i})\geq0$, then $\lim_{\varepsilon\to0}Z^{i}(\vartheta)>0$
for all $\vartheta>0$ and we may choose $\vartheta^{i}=0$.

For $0<\varepsilon<\bar{\varepsilon}=\eta/(T\left\Vert \phi\right\Vert )$,
we have that $|X(-T,\theta)-Y(-T,\theta)|<\eta$ and thus,
\begin{eqnarray*}
Z^{i}(\vartheta) & = & X(\theta^{i},\theta^{i}+\vartheta)+(1+\frac{1}{a_{i}})X(-T,\theta^{i})+\mathcal{O}(\eta).
\end{eqnarray*}
We can apply the same reasoning as before. The remaining terms of
order $\eta$ affect the difference $\bar{\theta}^{i+1}-\bar{\theta}^{i}$
only of order $\eta^{2}$ and can be neglected. In particular, $\vartheta^{i}>0$
(up to an error of order $\eta^{2}$), if and only $X(-T,\theta^{i})<0$.

(iv) Let $X(-T,\theta)$ and $Y(-T,\theta)$ be defined as in (iii).
By assumption $(\phi^{1})^{\prime}(\theta_{1})>0$ and $\phi^{1}(\theta)<0$
for all $\theta\in[-T,\theta_{1})$ such that $X(-T,\theta_{1})<0$.
Then, $Z^{1}(\vartheta^{1})=0$ implies $\vartheta^{1}>0$. The implicit
expression for $\vartheta^{1}$ follows directly from the condition
$Z^{1}(\vartheta^{1})=0$ using that $||X-Y||<\eta$. 
\end{proof}

As an immediate consequence, for sufficiently small $\eta$, $K_{T}^{\varepsilon,\eta}$
preserves number of zeros and contracts the distance between two zeros
to some positive distance. Thus, if for a given $\varepsilon$, we
are sufficiently close to the balance point, the individual drift
of the sign changes can be mitigated by the action of $K_{T}^{\varepsilon,\eta}$.
Naturally, as $K_{T}^{\varepsilon,\eta}$ influences the profile of
delay induced states, we expect small variations in the 'shape' of
delay-induced switched states as opposed to the solution of (\ref{eq:def-x})\textendash (\ref{eq:def-y})
for $\eta=0.$ The following section discusses this effect in more
detail.


\section{Profiles of delay-induced switched states\label{sec:profile}}

We reason that for a given function $f$ the ``shape'' of a delay-induced
switched state of (\ref{eq:def-x})\textendash (\ref{eq:def-y}) can
be qualitatively determined from the formal limit $\varepsilon=0$.
The motivation is not unlike the scalar case of delayed feedback:
If $\varepsilon x'(t)$ acts as a small perturbation, then the solution
to (\ref{eq:def-x})\textendash (\ref{eq:def-y}) can be sufficiently
well approximated by the reduced system
\begin{align}
x(t) & =-y(t)+f(x(t-1)),\label{eq:def-x-sing}\\
y^{\prime}(t) & =\eta x(t).\label{eq:def-y-sing}
\end{align}
Only when the derivative is large with respect to $1/\varepsilon$
this viewpoint becomes inadequate. Using the variations of constants
formula, (\ref{eq:def-x})\textendash (\ref{eq:def-y}) satisfies
\begin{eqnarray}
x(t) & = & -y(t)+f(x(t-1))+\left(x(t_0)+y(t_0)-f(x(t_0)-1)\right)e^{-(t-t_{0})/\varepsilon} \nonumber \\
&&+\int_{t_{0}}^{t}e^{-(t-s)/\varepsilon}\left[f^{\prime}(x(s-1))x'(s-1)-\eta x(s)\right]ds\label{eq:x-slow-bound}\\
y^{\prime}(t) & = & \eta x(t),\label{eq:y-slow-bound}
\end{eqnarray}
The integral representation (\ref{eq:x-slow-bound})\textendash (\ref{eq:y-slow-bound})
of (\ref{eq:def-x})\textendash (\ref{eq:def-y}) allows for the following
insight: As long as $|x^{\prime}(t)|$ is bounded and independent
of $\varepsilon$, i.e. $|x^{\prime}(t)|\ll C/\varepsilon$ for some
$C<\infty$, it immediately follows that $x(t)=-y(t)+f(x(t-1))+\mathcal{O}(\varepsilon)$
for all $t$. On the other hand, if the derivative is large, yet only
on a short interval $(a,b)\subset[-1,0],$ with $|b-a|\to0$ as $\varepsilon\to0$,
and $c/\varepsilon<|x^{\prime}(\theta)|<C/\varepsilon$ for some $c<C;$
then $x(t)=-y(t)+f(x(t-1))+\mathcal{O}\left(1\right)$ for $\theta\in(a,b)$.
So the derivative may act as a large perturbation within $(a,b)$.
However then, for $\theta\in(b,1]$ one can easily show that this
perturbation decays fast again as $x(t)=-y(t)+f(x(t-1))+\mathcal{O}\left(e^{-(t-b)/\varepsilon}\right)+\mathcal{O}\left(\varepsilon\right)$.
This results in the solution segment on $[t,t+1]$ to appear as closely
related to the solution segment on $[t-1,t]$, but possibly shifted
with respect to that segment.

Note that although $\varepsilon$ is supposed to be small, (\ref{eq:def-x})\textendash (\ref{eq:def-y})
cannot (or rather should not) be considered a 'small' perturbation
of (\ref{eq:x-slow-bound})\textendash (\ref{eq:y-slow-bound}) as
(i) $\varepsilon x^{\prime}(t)$ ought not be small, and (ii) the
nature of our problem changes from a Delay Differential Equation to
a Delay Difference Equation. This singular perturbation point of view
comes with several such technical difficulties.

Here, we focus on the analysis of the reduced system (\ref{eq:def-x-sing})\textendash (\ref{eq:def-y-sing}).
The following proposition states that (\ref{eq:def-x-sing})\textendash (\ref{eq:def-y-sing})
exhibits a period-1 solution, and sufficient conditions for it to
be unstable.
\begin{prop}[Period-$1$ solutions]
\label{prop:per-1} Let $\eta>0$ be sufficiently small. Then, 
\begin{itemize}
\item[(i)] There exists a period-$1$ solution $(x,y)$ of (\ref{eq:def-x-sing})\textendash (\ref{eq:def-y-sing}).
\item[(ii)]  $x$ satisfies $\int_{-1}^{0}x(t+s)ds=0$ for all $t.$
\item[(iii)] If the derivative $x^{\prime}(t)$ is defined, then
it satisfies
\begin{equation}
x^{\prime}(t)=\eta x(t)/(f^{\text{\ensuremath{\prime}}}(x(t))-1).\label{eq:x-manifold}
\end{equation}
\item[(iv)] If \textup{$|f^{\text{\ensuremath{\prime}}}(x(t))|>1$ }for some
$t\in[-1,0],$ then $x$ is unstable.
\end{itemize}
\end{prop}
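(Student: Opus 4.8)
The plan is to exploit the fact that any period-$1$ solution automatically satisfies $x(t-1)=x(t)$, so the delayed term in (\ref{eq:def-x-sing}) collapses and the first equation becomes the pointwise constraint $y(t)=f(x(t))-x(t)=:g(x(t))$. Thus every period-$1$ solution lies on the critical curve $y=g(x)$ and is driven along it by the slow equation $y'=\eta x$. Granting this, part (iii) is immediate: on any interval where $x$ is differentiable I differentiate the constraint, $y'=(f'(x)-1)x'$, and equate with $\eta x$ to obtain (\ref{eq:x-manifold}); and part (ii) follows by integrating $y'=\eta x$ over one period, since periodicity of $y$ forces $\int_0^1 x=0$, which by shift-invariance upgrades to $\int_{t-1}^t x=0$ for every $t$.

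For existence (i) I would argue by continuation from $\eta=0$. At $\eta=0$ the slow variable is frozen, $y\equiv\bar y$, and the first equation reduces to the scalar delay-difference relation $x(t)=f(x(t-1))-\bar y$, for which the value map $x\mapsto f(x)-\bar y$ fixes each root of $g(x)=\bar y$. Consequently any two-valued ``square wave'' alternating between an outer positive root $x_R>0$ and an outer negative root $x_L<0$ of $g(x)=\bar y$ is an exact period-$1$ solution, for arbitrary switch times, since the delay map preserves each value pointwise. Choosing $\bar y=\beta$ and the duty cycle so that $\int_0^1 x=0$ singles out an explicit solution $(\bar x,\bar y)$ of the balanced switched type. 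For $0<\eta\ll1$ I would continue this solution: the unknowns are the two switch (sign-change) times and the level $\bar y$, and the defining equations are the two \emph{constant-}$y$ matching conditions across the jumps together with the period-closure/zero-mean condition. On the outer branches $f'(x_{R,L})\neq1$, so the linearized interior drift is nondegenerate and the leading-order matching map is transverse; the implicit function theorem then yields a nearby period-$1$ solution whose plateaus drift only by $\mathcal O(\eta)$ while the switch times shift so as to keep the period equal to one.

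For instability (iv) I would linearize about $\bar x$. Writing $x=\bar x+u$, $y=\bar y+v$ and using $\bar x(t-1)=\bar x(t)$ gives the variational system $u(t)=f'(\bar x(t))\,u(t-1)-v(t)$, $v'(t)=\eta u(t)$. It suffices to produce one growing perturbation. Near a time $t_0$ with $|f'(\bar x(t_0))|>1$ I would take $u$ supported in a small neighborhood of $t_0$; since the slow coupling contributes only an $\mathcal O(\eta)$ correction, one period of the variational map multiplies this localized mode by $f'(\bar x(t_0))+\mathcal O(\eta)$, so iterating produces growth like $|f'(\bar x(t_0))|^{n}\to\infty$ for $\eta$ small. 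Equivalently, at $\eta=0$ the monodromy operator on the $u$-component is multiplication by $f'(\bar x(\cdot))$, whose spectrum is the range of $f'(\bar x)$; the hypothesis places a point of modulus $>1$ in this spectrum, so the spectral radius exceeds one and persists under the $\mathcal O(\eta)$ perturbation, forcing instability.

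The crux is the continuation in (i) across the \emph{discontinuous} $\eta=0$ limit. The limiting solution is a genuine square wave, so the natural phase space is piecewise-$C^1$ with jumps at the (unknown) switch times rather than $C^0$, and the switch times enter as free boundaries. The main work is therefore to set up the matching/return map in these variables and to verify its nondegeneracy at $\eta=0$---that the balanced geometry of $g=f-\mathrm{id}$ makes the two constant-$y$ jump conditions and the zero-mean condition independent---so that the implicit function theorem applies uniformly for small $\eta$.
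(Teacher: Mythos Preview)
Your arguments for (ii), (iii) and (iv) match the paper's essentially verbatim: collapse the delay via $x(t-1)=x(t)$, differentiate the constraint $y=f(x)-x$ to get (\ref{eq:x-manifold}), integrate $y'=\eta x$ over a period for the zero-mean property, and for instability track a perturbation localized at a single point $s$ with $|f'(x(s))|>1$, which the period map multiplies by $f'(x(s))$ (the paper uses a literal one-point perturbation so the integral coupling vanishes identically, not just to order~$\eta$).

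Where you differ is part (i). You propose to continue a discontinuous square wave from $\eta=0$ via an implicit-function argument in a free-boundary setting, and you correctly flag that this is where the work lies. The paper avoids this entirely by a direct construction \emph{at} $\eta>0$: once the delay has collapsed, the pointwise constraint $x=-y+f(x)$ together with $y'=\eta x$ already gives the scalar ODE $x'=\eta x/(f'(x)-1)$ on every interval where $x$ is smooth. One simply prescribes finitely many jump times $\theta^i$ and jump values $\chi^i$ with alternating sign and $f'(\chi^i)\neq1$, integrates this ODE on each piece, and then uses the abundant freedom in $(\theta^i,\chi^i)$ to enforce $\int_{-1}^0 x=0$ (which also closes up $y$). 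No continuation, no matching map, no IFT. Your route would work, but it is considerably heavier machinery for what is, after the delay collapses, an ODE boundary-value construction with many free parameters. Incidentally, invoking the balance point $\beta$ here is unnecessary: $\beta$ governs drift in the $\varepsilon>0$ scalar problem and plays no role in the $\varepsilon=0$ reduced system, where any level $\bar y$ admitting roots of both signs will do.
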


\begin{proof}
We refer to functions $x$ (of bounded variation) and $y$ (differentiable)
that satisfy (\ref{eq:def-x-sing})\textendash (\ref{eq:def-y-sing}),
and $x(t)=x(t-1)$, $y(t)=y(t-1)$ for all $t$, as period$-1$ solutions
of (\ref{eq:def-x-sing})\textendash (\ref{eq:def-y-sing}). A solution
to (\ref{eq:def-x-sing})\textendash (\ref{eq:def-y-sing}) satisfies
the functional equation 
\begin{align}
x(t) & =-y(t-1)-\eta\int_{t-1}^{t}x(s)ds+f(x(t-1)),\label{eq:def-x-sing-2}\\
y(t) & =y(t-1)+\eta\int_{t-1}^{t}x(s)ds.\label{eq:def-y-sing-2}
\end{align}
(\ref{eq:def-x-sing-2})\textendash (\ref{eq:def-y-sing-2}) follow
from straightforward integration of (\ref{eq:def-y-sing}). We construct
period$-1$ solutions of (\ref{eq:def-x-sing})\textendash (\ref{eq:def-y-sing})
using (\ref{eq:def-x-sing-2})\textendash (\ref{eq:def-y-sing-2}).
If such a solution exists, it immediately satisfies 
\begin{equation}
0=y(t)-y(t-1)=\eta\int_{-1}^{0}x_{t}(s)ds\label{eq:red-x-per-1}
\end{equation}
 for all $t$ and the values of $x$ and $y$ are fixed as $x_{k}(\theta)=x_{k-1}(\theta)=x(\theta)$
and $y_{k}(\theta)=y_{k-1}(\theta)=y(\theta)$ for all $k\in\mathbb{Z}$
and $\theta\in[-1,0]$. The resulting system
\begin{align}
x_{k}(\theta) & =-y(\theta)+f(x_{k}(\theta)),\label{eq:def-x-sing-1}\\
y_{k}(\theta) & =y(\theta),\label{eq:def-y-sing-1}
\end{align}
is decoupled in each ``point in space'' $\theta$ and $y(\theta)=y(0)+\eta\int_{-1}^{\theta}x_{0}(s)ds$
is to be considered as a parameter. We prescribe a fixed number of
sign changes $-1<\theta^{1}<\theta^{2}<\dots<\theta^{2N}<0,$ $2N\in\mathbb{N}$,
and in addition, choose initial values $\chi^{i},\,1\leq i\leq2N,$
$\chi^{i}\in\mathbb{R}$, with $\text{sgn}(\chi^{i})=-\text{sgn}(\chi^{i+1})$,
$\chi^{2N+1}=\chi^{1}$, and $f(\chi^{i})\neq1$. For $\theta\in[\theta^{i},\theta^{i+1})$
(with $\theta^{2N+1}=\theta^{1}$), $x_{0}(\theta)$ can be defined
as the solution of the Ordinary Differential Equation
\begin{equation}
x_{0}^{\prime}(\theta)=\eta x_{0}(\theta)/(f^{\text{\ensuremath{\prime}}}(x_{0}(\theta))-1),\label{eq:per-1-constr}
\end{equation}
with $x_{0}(\theta^{i})=\chi^{i}$. Here $\eta$ has to be small enough
such that $f^{\text{\ensuremath{\prime}}}(x_{0}(\theta))\neq1$ for
all $\theta\in[-1,0]$. As a result, $x_{0}$ is piecewise differentiable
(continuously differentiable from the right) with discontinuities
at $\theta^{i}$, and $x_{0}$ satisfies $x_{0}(0)=x_{0}(-1)$. Eq.~(\ref{eq:per-1-constr})
can be integrated such that $x_{0}(\theta)=y_{0}(\theta)+f(x_{0}(\theta))$,
where $y_{0}(\theta):=y_{0}(-1)+\eta\int_{-1}^{\theta}x_{0}(s)ds$
and $y_{0}(-1):=-x_{0}(-1)+f(x_{0}(-1)$. As $(\theta^{i}),$$(\chi^{i})$
were arbitrary, we may choose them such that $\int_{-1}^{0}x_{0}(s)ds=0$
and $y_{0}(0)=y_{0}(-1)$. As a result, the function $(x,y)$ defined
as $x_{k}(\theta)=x_{0}(\theta),\,y_{k}(\theta)=y_{0}(\theta),$ for
all $k\in\mathbb{Z}$ and $\theta\in[-1,0]$ is a 1-periodic solution
of (\ref{eq:def-x-sing-1})\textendash (\ref{eq:def-y-sing-1}). By
construction, $x(t)=x(t-1)$ and $y^{\prime}(t)=\eta x(t)$ for all
$t$, such that $(x,y)$ satisfies (\ref{eq:def-x-sing})\textendash (\ref{eq:def-y-sing}).
Thus follows (i) and (ii) is obvious from Eq.~(\ref{eq:red-x-per-1});
(iii) from Eq.~(\ref{eq:per-1-constr}).

For (iiv), we consider the time evolution $(\xi^{s},v^{s})$ of a
point perturbation $(\xi_{0}^{s},0)$ along the the period$-1$ solution
$(x_{0},y_{0})$, where $\xi_{0}^{s}(s)=1$ and $\xi_{0}^{s}(\theta)=0$
for all $\theta\in[-1,0]/\{s\}.$ One can easily show that $\xi^{s}$
satisfies 
\begin{align}
\xi_{n}^{s}(s) & =f^{\prime}(x_{0}(s))\xi_{n-1}^{s}(s),\label{eq:def-x-sing-2-1}
\end{align}
and $\xi_{n}^{s}(\theta)=0$ for all $n\in\mathbb{N}$ and $\theta\in[-1,0]/\{s\}.$
Here, we have used that $\eta\int_{-1}^{\theta}\xi_{n}^{s}(s)ds=0$
and $v_{n}^{s}(\theta)=v_{n-1}^{s}(\theta)=0$ for all $n\in\mathbb{N}$
and $\theta\in[-1,0].$ As a result, the perturbation $\xi_{0}^{s}$
grows if and only if $|f^{\prime}(x_{0}(s))|>1.$ This condition is
sufficient for $x_{0}$ to be unstable. 
\end{proof}
Our numerics shows that the solutions from the Proposition~\ref{prop:per-1}
are related to the delay-induced switched states in (\ref{eq:def-x})\textendash (\ref{eq:def-y}). Figure~\ref{fig:per-1}
displays the solution to (\ref{eq:def-x-sing})\textendash (\ref{eq:def-y-sing})
for two example functions $f$ and suitable initial conditions. Fig.~\ref{fig:per-1}(a)
shows a period-$1$ solution that switches between two distinct sets
of values with opposite sign, along which it changes regularly. In
fact, it changes exponentially with a small rate, and Prop.~\ref{prop:per-1}(iii)
determines this rate to be $\eta/(f^{\text{\ensuremath{\prime}}}(x(t))-1)$.
We remark that for small values of $\eta$, this trend can be approximated
as 
\[
x(t+\Delta t)=x(t)e^{\int_{t}^{t+\Delta t}\eta/(f^{\prime}(x(s))-1)ds}\approx x(t)+\mathcal{O}(\eta\Delta t),
\]
so that along such regular ``plateaus'' it appears to be linear
in $\eta$. Here, we assumed that $f^{\prime}(x(t))-1$ is uniformly
bounded away from zero. In Fig.~\ref{fig:per-1}(a), the parameters
are chosen such that $f^{\text{\ensuremath{\prime}}}(x(t))<1$ along
the solution. If this condition is not met for some values of $x$,
the corresponding period-$1$ solution is unstable, and the numerically
computed solution appears irregular for those values, see Fig.~\ref{fig:per-1}(b)
and compare Prop.~\ref{prop:per-1}(iii). This qualitatively resembles
delay-induced switched states with incoherent part in (\ref{eq:def-x})\textendash (\ref{eq:def-y}),
although the solution is more erratic than for positive $\varepsilon$,
compare Fig.~\ref{fig:A-solution}. We attribute this to the ``smoothing''
effect of the term $\varepsilon x^{\prime}$ in (\ref{eq:def-x})\textendash (\ref{eq:def-y}),
which is not present in (\ref{eq:def-x-sing})\textendash (\ref{eq:def-y-sing}).

The solutions shown in Fig.~\ref{fig:per-1} correspond to well chosen
initial conditions satisfying the integral property in Prop.~\ref{prop:per-1}(ii).
For strictly positive (or analogously strictly negative) initial conditions
the solution to (\ref{eq:def-x-sing})\textendash (\ref{eq:def-y-sing})
does not resemble a delay-induced switched state. As a result, for
such initial conditions, we do not expect delay-induced states in
(\ref{eq:def-x})\textendash (\ref{eq:def-y}). The next section discusses
this fact in more detail.

\begin{figure}[!]
	\centering{}\includegraphics[width=0.9\linewidth]{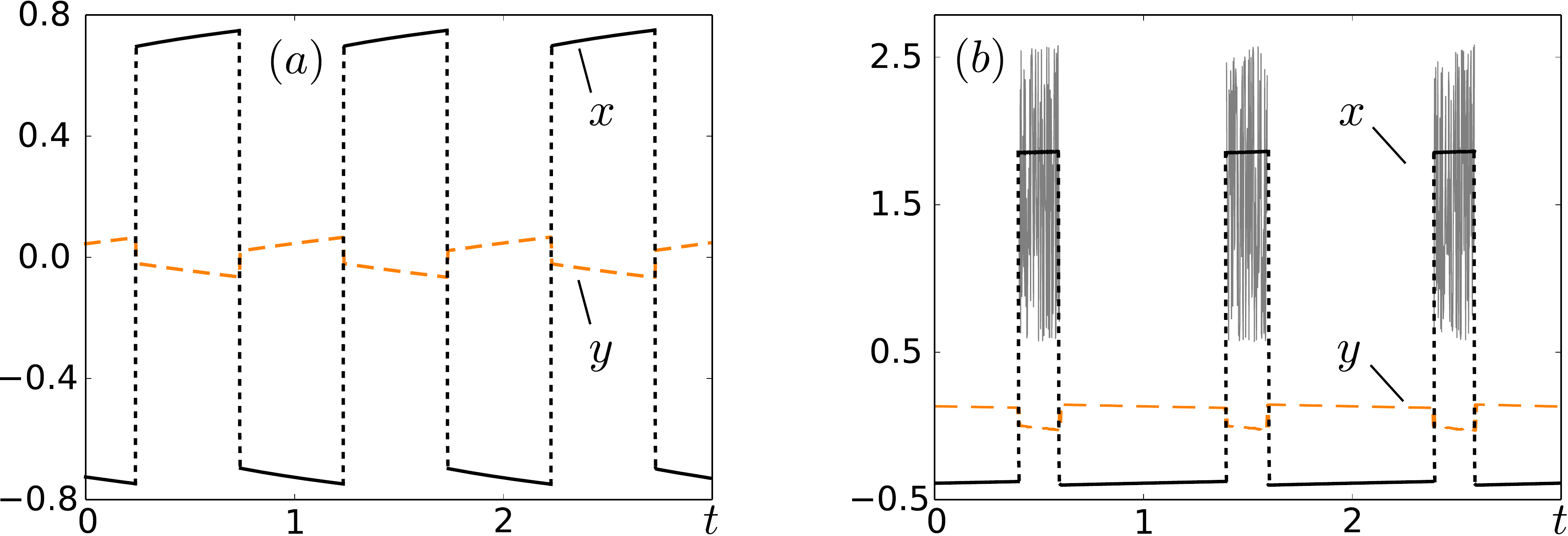}\caption{\label{fig:per-1}(color online) Numerical solution $x$ (black/gray
		solid, discontinuities dotted) and $y$ (orange dashed) of (\ref{eq:def-x-sing})\textendash (\ref{eq:def-y-sing})
		for $\eta=0.3$ and suitable initial condition. Panel (a): period-$1$
		solution for $f(x)=1.2x/(1+x^{4})$. Panel (b) shows irregular behavior
		of the $x$-component (gray) for $f(x)=2x/(1+(0.8-x)^{4})$ whenever
		$|f^{\prime}(x(t))|>1$. The $x$-component of the corresponding unstable
		period-$1$ solution is shown in black.}
\end{figure}


\section{Coexistence with fold-induced slow-fast oscillations\label{sec:coex}}

Section~\ref{sec:balance} revealed that for a given function $f$
and carefully chosen initial conditions close to the balance point,
(\ref{eq:def-x})\textendash (\ref{eq:def-y}) displays delay-induced
switched states. Here, we show that this class of solutions coexists
with another type of solutions oscillating on a much longer timescale
of order $1/\eta$, which can be selected via the choice of initial
conditions.

To begin with, let us choose $f(x)=1.2x/(1+x^{4})$. The balance point
is given by $y(0)=0$ as we have argued in Sec.~\ref{sec:prelim}.
Fig.~\ref{fig:per}(a) shows the $\omega$-limit set of initial
conditions with $y(0)=0$ and two sign changes for various $\eta$.
For $\eta=0.001$, we observe a delay-induced switch state with almost
constant plateaus. The orange dashed curve in Fig.~\ref{fig:per}(a)
shows the fixed points of Eq.~(\ref{eq:def-x-sing}) parameterized
by constant $y=v$, i.e. the fixed points of the iteration $(\chi,v)\mapsto(f(\chi)-v,v)\in\mathbb{R}^{2}$.
We refer to this curve as the critical manifold. We observe that each
sign change in the past, causes the solution to switch to the opposite
branch of the critical manifold (with opposite sign) after some time close to one. Along the critical manifold, $x$ satisfies Eq.~(\ref{eq:x-manifold})
up to an error of order $\varepsilon$, and as a result, $x$ and
$y$ change with rate proportional to $\eta$ on timescale $1$. Therefore,
as we increase $\text{\ensuremath{\eta}},$ the variations along the
plateaus become more pronounced, and the range of $y$ approximately
scales proportional to $\eta;$ compare Fig.~\ref{fig:per}(a). 

\begin{figure}[!]
\noindent \centering{}\includegraphics[width=0.9\linewidth]{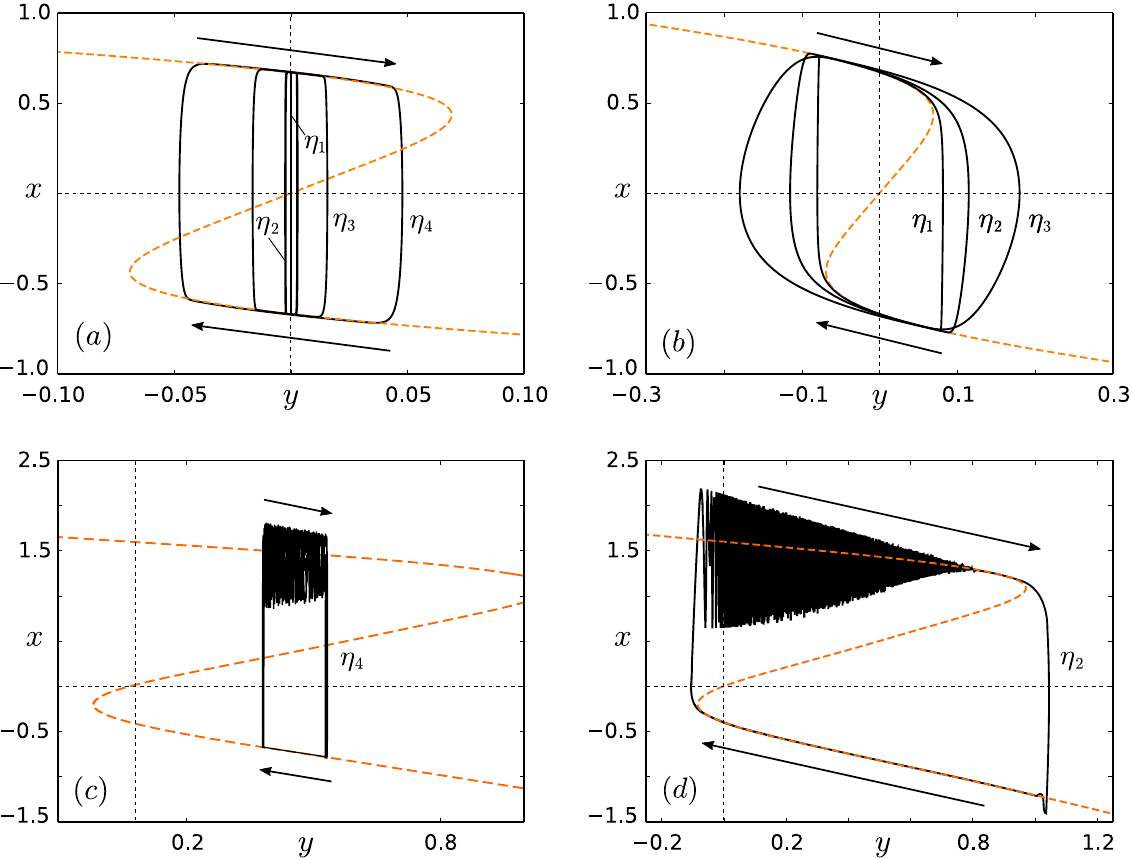}\caption{\label{fig:per}(color online) Delay-induced switched states and ``fold-induced''
slow-fast oscillations coexist in (\ref{eq:def-x})\textendash (\ref{eq:def-y}).
Panels (a\textendash d) show the projection to the $(x,y)$-plane
of the $\omega$-limit set for different initial conditions, choices
of the function $f$ and parameters $\eta_{1}=0.001$, $\eta_{2}=0.01$,
$\eta_{3}=0.1$ and $\eta_{4}=0.3$. The dashed (orange) curve corresponds
to fixed points of the map $(x,y)\protect\mapsto(f(x)-y,y)$, dotted
(black) lines correspond to $x=0$ and $y=0$. Panel (a-b): $\varepsilon=0.01$,
$f(x)=1.2x/(1+x^{4})$, $y(0)=0$; (a): $x_{0}(t)=-1$ for $-1\leq t\leq-1/2,$
$x_{0}(t)=1$ for $-1/2<t\leq0$ (two sign changes), and (b): $x_{0}(t)=-1$
for $-1<t\leq0$ (no sign changes). Panel (c-d): $\varepsilon=0.003,$
$f(x)=2x/(1+(0.8-x)^{4})$ and $y(0)=0.514$; (c): $x_{0}$ as (a),
and (d): $x_{0}$ as (b).}
\end{figure}

When instead we choose $x_{0}$ without sign changes, the picture
is different. Fig.~\ref{fig:per}(b) shows the corresponding $\omega$-limit
set and reveals a different class of solutions to (\ref{eq:def-x})\textendash (\ref{eq:def-y}).
We remark these solutions are reminiscent of slow-fast relaxation
oscillations in systems of ordinary differential equations \cite{Kuehn2015},
the techniques of which can be used to rigorously study those solutions
using geometric theory for semi-flows \cite{Diekmann1995,Bates1998}.
Interestingly here, the switching is not induced by a corresponding
sign change in the past, but rather by a transition through a fold.
Hence, we refer to such solutions as fold-induced slow-fast oscillation
to distinguish them from our solutions of interest. It is out of scope
for this article to give a detailed analysis, but we offer the following
intuition: 
In Sec.~\ref{sec:prelim}, we defined the balance point $y(0)=v$
such that the solution to
\begin{align}
\varepsilon x^{\prime}(t) & =-x(t)-v+f(x(t-1)),\label{eq:def-x-1}
\end{align}
displays exponentially long transients for initial conditions that
have sign changes. Let us put $v=0$ for now. Then, starting with
an initial condition without sign changes, the solutions preserves
its sign \cite{Rost2007}. The $\omega$-limit set in this case may
consist of the nontrivial equilibrium point, a periodic orbit or a
chaotic attractor depending on $f$ \cite{Rost2007}. If the equilibrium
is linearly stable, then the solution converges to it exponentially
fast. The value of the equilibrium is given by the intersection of
the critical manifold with the line $\left\{ y=0,x\lessgtr0\right\} $
in Fig.~\ref{fig:per}(b). For sufficiently small $\eta$, we can
then think of $v=y(t)$ as changing adiabatically, that is the convergence
to the equilibrium for $v\neq0$ is fast, compared to the flow along
the critical manifold. This mechanism gives rise to slow-fast relaxation
oscillations with period of order $1/\eta$, see Fig.~\ref{fig:per}(b).

Let now $f(x)=2x/(1+(x-0.8)^{4})$ and $y(0)\approx0.514$ as numerically
determined in Sec.~\ref{sec:balance}. The stability of an equilibrium
$x(t)=\xi=\text{const}$ of (\ref{eq:def-x-1}) is easily determined.
If $|f^{\prime}(\xi)|>1$, there exists $\bar{\varepsilon}>0$, such
that it is unstable for all $\varepsilon<\bar{\varepsilon}$; and
it is stable for all $\varepsilon>0$, otherwise. At $\varepsilon=\bar{\varepsilon}$
the equilibrium undergoes a Hopf bifurcation with respect to $\varepsilon;$
the proof is analogous to the zero equilibrium in Sec.~\ref{sec:set-up}.
This condition is very much related to Prop.~\ref{prop:per-1}(iv),
for which we obtain delay-induced switched states with incoherent
parts. The corresponding solution to initial conditions without sign
changes is shown in Fig.~\ref{fig:per}(d). This type of solution
has been coined chaotic breathers \cite{Kouomou2005}; we will not
discuss this type of solution in detail. However, we want to emphasize
that they are likely candidates for the omega-limit set of initial
condition with sign changes in (\ref{eq:def-x})\textendash (\ref{eq:def-y})
when is $y(0)$ chosen too far from the balance point.


\section{Outlook and Discussion}

We have investigated delay-induced switched states in systems of
the form (\ref{eq:def-x})-(\ref{eq:def-y}) and how it arises from
the formal limits $\varepsilon\to0$ and/or $\eta\to0$. The phenomenon
itself is more general and has been observed in various slow-fast
systems including time delay, such as laser systems \cite{Weicker2012,Weicker2013}
and in a model of neuronal activity \cite{Erneux2016}.  
In our examples, we restricted ourselves to the case where switching
occurred once per delayed interval, yet multiple switched solution
can be observed as well \cite{Larger2013,Larger2015}. Likewise,
we did not investigate whether delayed-induced switched states can
be reached from non-switched initial conditions. Such a transition
could for example be induced by interaction of the solutions with
a fold, similar to canard solutions \cite{Campbell2009,Krupa2014},
that may lead to a fast increase of the number of sign changes along
a delay interval.
We have introduced the concept of a balance point of (\ref{eq:def-x})-(\ref{eq:def-y}),
and alongside our rigorous results on the ``stabilization'' mechanism
close to the balance point, we included a detailed numerical exploration.
A rigorous proof of the existence of delay-induced states, is beyond
the expository character of this article, and we will address this
question in future works. A promising direction poses the customization
of the asymptotic methods used to study the drift in scalar delayed
feedback equations \cite{Lin1986,Chow1989,Nizette2003,Nizette2004,Grotta-Ragazzo2010,Grotta-Ragazzo1999,Wattis2017}.

\section*{Acknowledgments}

This research was conducted within the framework of CRC 910 founded
by the German Research Foundation (DFG). The authors would like to
thank Laurant Larger, Yuri Maistrenko and Matthias Wolfrum for valuable discussion and suggestions.

\end{document}